\colorlet{darkblue}{blue!50!black}
\colorlet{darkmagenta}{magenta!80!black}
\newcommand{\p}{\partial}
\newcommand{\e}{\varepsilon}
\newcommand{\R}{{\mathbb R}}
\newcommand{\Z}{{\mathbb Z}}
\newcommand{\LLL}{{\mathscr L}}
\newcommand{\XX}{{\cal X}}
\newcommand{\dd}{{\textup d}}
\newcommand{\aaaa}{{\mathfrak a}}
\newcommand{\diver}{\mathop{\rm div}\nolimits}
\newcommand{\sgn}{\mathop{\rm sgn}\nolimits}
\theoremstyle{plain}
\newtheorem*{mtheorem}{Main Theorem}
\newtheorem{theorem}{Theorem}[section]
\newtheorem{lemma}[theorem]{Lemma}
\newtheorem{proposition}[theorem]{Proposition}
\newtheorem{corollary}[theorem]{Corollary}
\theoremstyle{definition}
\newtheorem{op}{Open problem}
\theoremstyle{remark}
\newtheorem{remark}[theorem]{Remark}
\numberwithin{equation}{section}
\let\@fnsymbol\@arabic
\begin{document}
\title{Stabilisation of a viscous conservation law\\ 
by a one-dimensional external force}
\author{Ana Djurdjevac\footnote{Freie Universit\"at Berlin, Arnimallee 9, 14195 Berlin, Germany; e-mail: \href{mailto:adjurdjevac@zedat.fu-berlin.de}{adjurdjevac@zedat.fu-berlin.de}.} \and Armen Shirikyan\footnote{Department of Mathematics, CY Cergy Paris University, CNRS UMR 8088, 2 avenue Adolphe Chauvin, 95302  Cergy--Pontoise, France; e-mail: \href{mailto:Armen.Shirikyan@cyu.fr}{Armen.Shirikyan@cyu.fr}.}}
\date{}
\maketitle

\vspace{-0.45cm}
\begin{abstract}
We study a damped scalar conservation law driven by the sum of a fixed external force and a localised one-dimensional control. The problem is considered in a bounded domain and is supplemented with the Dirichlet boundary condition. It is proved that any solution of the uncontrolled equation can be exponentially stabilised. As a consequence, we obtain the global approximate controllability to trajectories by a one-dimensional localised control. 

\smallskip
\noindent
{\bf AMS subject classifications:} 35L65, 35Q93, 35R60, 93C20 

\smallskip
\noindent
{\bf Keywords:} viscous conservation law, exponential stabilisation, approximate controllability, one-dimensional control 
\end{abstract}

\tableofcontents
\setcounter{section}{-1}

\section{Introduction}
This paper deals with the problem of stabilisation of a damped-driven conservation law in arbitrary space dimension. To simplify the presentation, we confine ourselves in the introduction to the one-dimensional case. We thus fix an interval $I:=[a,b]$ and consider Burgers' equation
\begin{equation} \label{burgers}
	\p_tu+\p_xA(u)-\nu\p_x^2u=h(t,x), \quad x\in I,
\end{equation}
supplemented with Dirichlet's boundary condition
\begin{equation} \label{dirichlet}
	u(t,a)=u(t,b)=0
\end{equation}
and the initial condition
\begin{equation} \label{IC-1D}
	u(0,x)=u_0(x).
\end{equation} 
Here $\nu>0$ is a parameter, $A:\R\to\R$ is a given continuously differentiable function, and~$h$ is an external force. It will be convenient to study~\eqref{burgers}--\eqref{IC-1D} in H\"older's classes of functions, so we fix a number $\gamma\in(0,1)$ and denote by~$C^\gamma(I)$ the space of continuous functions $g:I\to\R$ such that 
$$
\|g\|_{C^\gamma}:=\sup_{x\in I}|g(x)|+\sup_{x_1,x_2\in I}\frac{|g(x_1)-g(x_2)|}{|x_1-x_2|^\gamma}<\infty. 
$$
It is well known that, for any continuous function~$h:\R_+\times I\to \R$ such that 
\begin{equation*} \label{gamma-norm}
|\!|\!|h|\!|\!|_\gamma:=\sup_{t\ge0,\, x\in I}|h(t,x)|+\sup_{t_1,t_2\ge0\atop x_1,x_2\in I}\frac{|h(t_1,x_2)-h(t_2,x_2)|}{|t_1-t_2|^{\gamma/2}+|x_1-x_2|^\gamma}<\infty,	
\end{equation*}
and any initial state $u_0\in C^\gamma(I)$ satisfying an appropriate compatibility condition (see~\eqref{compatibility}), problem~\eqref{burgers}--\eqref{IC-1D} has a unique solution $u:\R_+\times I\to\R$, continuously differentiable in~$t$ and twice continuously differentiable in~$x$. Moreover, $u$~belongs to the space $\XX^\gamma$ of continuous functions $v:\R_+\times I\to\R$ whose derivatives $\p_t v,\p_xv,\p_x^2v$ exist and have finite~$|\!|\!|\cdot|\!|\!|_\gamma$ norms.

We now fix an arbitrary non-negative smooth function $\varphi\in C_0^\infty(I)$ that does not vanish identically and has compact support, and consider the controlled Burgers equation
\begin{equation*} \label{controlled-burgers}
	\p_tu+\p_xA(u)-\nu\p_x^2u=h(t,x)+\xi(t)\varphi(x), \quad x\in I,
\end{equation*}
where $\xi$ is a control function. Given an arbitrary solution $\hat u\in\XX^\gamma$ of~\eqref{burgers}, \eqref{dirichlet} and an initial condition $u_0\in C^\gamma(I)$, we wish to find a piecewise-constant function $\xi:\R_+\to\R$ such that the solution $u\in\XX^\gamma$ of~\eqref{burgers}--\eqref{IC-1D} satisfies the inequality
\begin{equation*} \label{expo-stabilisation}
	\|u(t)-\hat u(t)\|_{C^2}\le Ce^{-\alpha t},
\end{equation*}
where~$C$ and~$\alpha$ are some positive numbers, and the $C^2$ norm is defined as the maximum of the sum of the space  derivatives up to the second order. If this property holds for any reference solution~$\hat u$, then we shall say that Eq.~\eqref{burgers} is {\it exponentially stabilisable\/} in~$C^2$ by a one-dimensional external force (proportional to~$\varphi$). The following theorem is a particular case of the main result of this paper formulated in Theorem~\ref{t-control}. 

\begin{mtheorem}
	In addition to the above hypotheses, suppose that the function $A\in C^2(\R)$ satisfies the inequality
	\begin{equation*}
		A'(u)u\ge c\,u^2-C, \quad u\in \R,
	\end{equation*}
	where $C$ and $c$ are some positive numbers. Then Burgers' equation~\eqref{burgers} is exponentially stabilisable in the space~$C^2$. 
\end{mtheorem}

It should be noted that the problem of control and stabilisation of Burgers' equation and more general viscous conservation laws attracted a lot of attention in the last thirty years. In particular, Fursikov and Imanuvilov~\cite{FI-1995} studied the 1D Burgers equation~\eqref{burgers} with quadratic flux~$A(u)=\frac12u^2$ and proved the local exact controllability and the absence of global approximate controllability. The latter property was refined by Diaz~\cite{diaz-1996}, and further non-controllability results were obtained by Guererro--Imanuvilov~\cite{GI-2007}. Glass and Guererro~\cite{GG-2007}, L\'eautaud~\cite{leautaud-2012}, Coron~\cite{coron-2007}, and Fern\'andez-Cara--Guererro~\cite{FG-2007} proved global exact boundary controllability to constant states for Burgers' equation and established some estimates for the time and cost of control. Kr\"oner--Rodrigues~\cite{KR-2015} studied the stabilisation to a non-stationary solution with a control localised in the Fourier space, and the second author~\cite{shirikyan-jep2017} proved the stabilisation of non-stationary trajectories by a control with support in an interval. In the two-dimensional case, Thevenet--Buchot--Raymond~\cite{TBR-2010} established the exponential stabilisation of stationary solutions by a feedback control. In conclusion, let us mention that various results on controllability and stabilisation were established for other equations of fluid mechanics, such as the Euler, Navier--Stokes, and Boussinesq systems (see the books~\cite{FI1996,fursikov2000,coron2007} and the references therein). However, we do not mention them here, since they are not directly related to the subject of this paper, whose methods are based on the theory of positivity-preserving operators and are not likely to be applicable in the case of the above-mentioned PDEs.

\smallskip
In the context of damped conservation laws, many questions remain open. We formulate only two of them, closely related to the main result of  this paper. 

\begin{op}
Theorem~\ref{t-control} shows that, given a trajectory~$\hat u(t,x)$ of a viscous conservation law and an arbitrary initial condition, one can find a one-dimensional localised control bringing the corresponding solution~$u(t,x)$ arbitrarily close to~$\hat u$ at some finite time. In the case of Burgers' equation, the Fursikov--Imanuvilov local controllability result~\cite{FI1996} implies that the two trajectories can be made to be equal with the help of a localised control. Imanuvilov and Yamamoto~\cite{IY-2003} established a similar local exact controllability result in the multidimensional case for a nonlinear parabolic equation, assuming that the nonlinearity has at most linear growth at infinity. It is a challenging open problem to prove the property of exact controllability (in the multidimensional case) with no growth conditions on the flux~$A$. 
\end{op}

\begin{op}
The local-in-time $L^1$ norm of the one-dimensional stabilising control obtained in Theorem~\ref{t-control} goes to zero exponentially fast as $t\to\infty$. On the other hand, it follows from~\eqref{control-estimate} that the size of control is of order~$1$ on some intervals. From the point of view of applications, it would be important to prove that the control can be chosen in a feedback form, so that its size decays to zero as $t\to\infty$. This type of (local) results are well known in the case of stationary reference trajectories; see~\cite{fursikov-2004,raymond-2007}. 
\end{op}

The paper is organised as follows. In Section~\ref{s-IBVP}, we discuss the well-posedness of the damped-driven conservation law and establish some estimates for solutions. In Section~\ref{s-controllability}, we formulate and prove the main result of this paper. The appendix gathers some known results used in the main text. 

\subsubsection*{Acknowledgements}
The research of the first author was funded by Deutsche Forschungsgemeinschaft (DFG) through the grant CRC 1114: {\it Scaling Cascades in Complex Systems\/}, Project Number 235221301. The research of the second author was carried out within the MME-DII Center of Excellence (ANR-11-LABX-0023-01) and supported by the \textit{Agence Nationale de la Recherche\/} through the grant NONSTOPS (ANR-17-CE40-0006) and by the \textit{CY Initiative of Excellence\/} through the grant {\it Investissements d'Avenir\/} ANR-16-IDEX-0008. 

\subsubsection*{Notation}
We denote by~$D$ a connected bounded domain  in~$\R^d$ with a~$C^3$ boundary~$\p D$ and  by~$J_T$ the time interval $[0,T]$ and write $Q_T=J_T\times D$. Given a number~$\gamma\in(0,1)$, we denote by $C^\gamma(D)$ the space of continuous functions $g:D\to\R$ such that 
$$
\|g\|_{C^\gamma}:=\sup_{x\in D}|g(x)|+\sup_{x_1,x_2\in D}\frac{|g(x_1)-g(x_2)|}{|x_1-x_2|^\gamma}<\infty. 
$$
Similarly, if $k\ge1$ is an integer, then~$C^{k+\gamma}(D)$ stands for the space of functions $g\in C^\gamma(D)$ whose derivatives up to the order~$k$ belong to~$C^\gamma(D)$. The corresponding norm is defined by the formula
$$
\|g\|_{C^{k+\gamma}}:=\sum_{|\alpha|\le k}\|\p_x^\alpha g\|_{C^\gamma}. 
$$
For a cylinder $Q=J\times D$ (where $J\subset\R$ is an interval), we write~$C^{1,2}(\overline Q)$ for the space of continuous functions $h:\overline Q\to\R$ that are continuously differentiable in~$t$ and twice continuously differentiable in~$x$. For $\gamma\in(0,1)$, we denote by~$C^{\frac{\gamma}{2},\gamma}(Q)$ the space of continuous functions~$h$ such that 
$$
|\!|\!|h|\!|\!|_\gamma:=\sup_{(t,x)\in Q}|h(t,x)|+\sup_{t_1,t_2\in J\atop x_1,x_2\in D}\frac{|h(t_1,x_2)-h(t_2,x_2)|}{|t_1-t_2|^{\gamma/2}+|x_1-x_2|^\gamma}<\infty. 
$$
We write~$\XX^\gamma(Q)$ for the space of functions in~$C^{1,2}(\overline Q)$ whose first-order derivative in~$t$ and second-order derivatives in~$x$ belong to~$C^{\frac{\gamma}{2},\gamma}(Q)$. In the case when~$J$ is replaced by~$\R_+$, the spaces mentioned above will be denoted by~$C^{1,2}$, $C^{\frac{\gamma}{2},\gamma}$, and~$\XX^\gamma$, respectively. Finally, we denote by $C_b$ the space of bounded continuous functions on the cylinder~$\R_+\times\overline D$. 

\section{Preliminaries on multidimensional viscous conservation laws}
\label{s-IBVP}

\subsection{Existence and uniqueness of solution}
\label{ss-existence}
Let us consider the following initial-boundary value problem in a bounded domain $D\subset\R^d$ with a boundary~$\p D\in C^3$:
\begin{align}
	\p_t u+\diver_x A(u)-\nu\Delta u
	&=h(t,x),
	\label{PDE}\\
	u\bigr|_{\p D}&=0,\label{BC}\\
	u(0,x)&=u_0(x).\label{IC}
\end{align}
Here $\nu>0$ is a parameter, $h$ and~$u_0$ are continuous functions in~$\R_+\times \overline D$ and~$\overline D$, respectively, and $A:\R\to\R^d$ is a twice continuously differentiable vector function such that $A(0)=0$.  We are interested in the existence and uniqueness of solutions for problem~\eqref{PDE}--\eqref{IC} in  H\"older spaces. 
\begin{theorem} \label{t-PDE-IVP}
	Let $\gamma\in(0,1)$, $T>0$, and $\nu>0$ be any numbers. Then, for any data $h\in C^{\frac{\gamma}{2},\gamma}(Q_T)$ and $u_0\in C^{2+\gamma}(D)$ satisfying the compatibility conditions
	\begin{equation} \label{compatibility}
	u_0=0, \quad 
	\diver_xA(u_0)-\nu \Delta u_0-h(0)=0 \quad\mbox{on $\p D$},
	\end{equation}
problem \eqref{PDE}--\eqref{IC} possesses a unique solution  $u\in \XX^\gamma(Q_T)$. Moreover, there is a number $M>0$ depending continuously on~$T$, $|\!|\!|h|\!|\!|_\gamma$, and $\|u_0\|_{C^{2+\gamma}}$ such that 
\begin{equation} \label{bound}
	\|u\|_{\XX^\gamma(Q_T)}\le M. 
\end{equation}
\end{theorem}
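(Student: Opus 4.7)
The plan is to treat \eqref{PDE}--\eqref{IC} as a semilinear parabolic problem by writing $\diver_x A(u) = A'(u)\cdot\nabla u$, reduce to the case of a bounded nonlinearity by a maximum-principle truncation, and then apply classical Schauder theory together with a contraction-mapping argument. The delicate point is producing an a~priori estimate of the form \eqref{bound} with $M$ depending continuously on the data.

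\textbf{Step 1 (a priori $L^\infty$ bound and truncation).} If $u\in\XX^\gamma(Q_T)$ is any classical solution, then at an interior extremum of $u$ we have $\nabla u=0$ and $\Delta u$ of the appropriate sign, so the parabolic maximum principle applied to
\[
\p_t u + A'(u)\cdot\nabla u - \nu\Delta u = h
\]
(combined with the homogeneous Dirichlet condition \eqref{BC}) yields
\[
\|u\|_{L^\infty(Q_T)} \le R_0 := \|u_0\|_{L^\infty(D)} + T\,\|h\|_{L^\infty(Q_T)}.
\]
Pick a cut-off $\tilde A\in C^2(\R,\R^d)$ that coincides with $A$ on $[-2R_0,2R_0]$ and satisfies $\tilde A(0)=0$ together with $\tilde A',\tilde A''\in L^\infty(\R)$. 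Since the same maximum principle applies to the modified equation, any $\XX^\gamma$-solution of \eqref{PDE}--\eqref{IC} solves the problem with $\tilde A$ in place of $A$ and conversely. It therefore suffices to prove the theorem with a flux whose first two derivatives are globally bounded.

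\textbf{Step 2 (local existence by fixed point).} For the modified problem, define a map $\Phi\colon v\mapsto u$ on a closed ball $B_\rho\subset \XX^\gamma(Q_{T_0})$ by letting $u$ solve the \emph{linear} Dirichlet problem
\[
\p_t u - \nu\Delta u = h - \tilde A'(v)\cdot\nabla v, \qquad u|_{\p D}=0,\qquad u(0)=u_0.
\]
The compatibility conditions \eqref{compatibility} carry over to this linear problem, so classical parabolic Schauder theory (Ladyzhenskaya--Solonnikov--Ural'tseva type estimates, recalled in the appendix) gives
\[
\|u\|_{\XX^\gamma(Q_{T_0})} \le C\bigl(|\!|\!|h|\!|\!|_\gamma + |\!|\!|\tilde A'(v)\cdot\nabla v|\!|\!|_\gamma + \|u_0\|_{C^{2+\gamma}}\bigr).
\]
Boundedness of $\tilde A',\tilde A''$ makes $v\mapsto \tilde A'(v)\cdot\nabla v$ Lipschitz from $\XX^\gamma(Q_{T_0})$ to $C^{\gamma/2,\gamma}(Q_{T_0})$ on bounded sets, and a short-time estimate shows that the Lipschitz constant of $\Phi$ tends to $0$ as $T_0\to0$. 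Choosing $\rho$ slightly larger than $C(|\!|\!|h|\!|\!|_\gamma+\|u_0\|_{C^{2+\gamma}})$ and $T_0$ small, $\Phi$ contracts $B_\rho$ into itself and Banach's theorem produces a unique local solution. Uniqueness in the whole class $\XX^\gamma(Q_T)$ then follows by a standard Gronwall argument applied to the difference of two solutions.

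\textbf{Step 3 (extension to $Q_T$ and continuous dependence).} The local solution extends as long as one has a bound on $\|u\|_{\XX^\gamma}$. By Step 1, $\|u\|_{L^\infty(Q_T)}\le R_0$, so in the semilinear formulation $\p_t u-\nu\Delta u=h-\tilde A'(u)\cdot\nabla u$ the coefficient $\tilde A'(u)$ belongs to $L^\infty(Q_T)$ with norm controlled by the data. A parabolic bootstrap -- first Hölder continuity of $u$ via De Giorgi--Nash, then $\nabla u\in C^{\gamma/2,\gamma}$ by Schauder estimates for equations with Hölder coefficients, and finally the full $\XX^\gamma$ norm -- yields a bound of the form \eqref{bound} with $M$ depending continuously on $T$, $|\!|\!|h|\!|\!|_\gamma$, and $\|u_0\|_{C^{2+\gamma}}$. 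This a~priori estimate rules out finite-time blow-up and allows the local solution to be continued over the whole of $Q_T$. The main technical obstacle I anticipate is this intermediate bootstrap: one must upgrade $L^\infty$ control on $u$ to Hölder control on $\nabla u$ in a form compatible with the compatibility condition at $t=0$, so that the constant $M$ in \eqref{bound} is controlled uniformly in terms of the given norms.
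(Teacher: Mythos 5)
Your argument is essentially correct, but it takes a genuinely different route from the paper. The paper's proof is a one-paragraph reduction to Theorem~6.1 in \cite[Chapter~V]{LSU1968} on quasilinear parabolic equations in divergence form: writing $\aaaa(t,x,u,p)=\sum_jA_j'(u)p_j-h(t,x)$, the only structural hypothesis to verify is $|\aaaa(t,x,u,0)|\le C$, which is immediate, and the bound~\eqref{bound} comes for free from the proof of that theorem. You instead rebuild the quasilinear theory from the linear one: maximum-principle $L^\infty$ bound (this is exactly Proposition~\ref{p-maximum} of the appendix), truncation of the flux, contraction mapping via linear Schauder estimates, and a De~Giorgi--Nash/Schauder bootstrap to get the a priori $\XX^\gamma$ bound and continuation (the same bootstrap the paper uses later, in the proof of Corollary~\ref{c-uniform-bounds}, for the global-in-time estimates). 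Your route is more self-contained and makes the dependence of~$M$ on the data transparent, at the cost of length and of two points you should make explicit: (i) in Step~2 the fixed-point set must be restricted to $v$ with $v(0)=u_0$, both so that the compatibility condition for the linearised problem reduces to~\eqref{compatibility} and so that differences $\Phi(v_1)-\Phi(v_2)$ vanish at $t=0$; (ii) the claim that the Lipschitz constant of~$\Phi$ tends to~$0$ as $T_0\to0$ is the delicate step in H\"older spaces --- the Schauder constant itself does not shrink, and one must exploit the extra regularity of $\nabla v$ for $v\in\XX^\gamma$ together with the vanishing at $t=0$ (or work with a pair of H\"older exponents) to extract a positive power of~$T_0$. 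Also, in Step~3 the De~Giorgi--Nash step should be applied to the divergence form $\p_tu-\nu\Delta u+\diver(\tilde A(u))=h$ with bounded flux (as in Theorem~10.1 of \cite[Chapter~III]{LSU1968}) rather than to the nondivergence form with the unbounded term $\tilde A'(u)\cdot\nabla u$ on the right-hand side. None of these is a fatal gap; they are the standard technicalities that the cited LSU theorem packages away.
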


\begin{proof}
We claim that Theorem 6.1 in~\cite[Chapter~V]{LSU1968} on quasilinear parabolic PDEs of divergence form is applicable in our situation. Indeed, in view of that result, setting 
$$
\aaaa(t,x,u,p)=\sum_{j=1}^dA_j'(u)p_j-h(t,x),
$$
we only need to check that $|\aaaa(t,x,u,0)|\le C$ for all $(t,x,u)\in Q_T\times\R$. This inequality is obvious, and the existence and uniqueness of solution in the space~$\XX^\gamma(Q_T)$ follows. Inequality~\eqref{bound} is established in the proof of Theorem 6.1 in~\cite[Chapter~V]{LSU1968}. 
\end{proof}

\begin{remark} \label{r-existence}
	The above result on the existence and uniqueness of solution remains valid if the right-hand side~$h$ of~\eqref{PDE} is a function with the following property: there is partition $0=t_0<t_1<\cdots<t_n=T$ of the interval~$J_T$ such that, for any $k\in[\![1,n]\!]$, the restriction of~$h$ to $Q_k:=(t_{k-1},t_k)\times D$ belongs to~$C^{\frac{\gamma}{2},\gamma}(Q_k)$. Indeed, it suffices to apply Theorem~\ref{t-PDE-IVP} to each of the cylinders~$Q_k$ and to consider the concatenation of the resulting solutions. The trajectory obtained by that procedure will not be an element of~$\XX^\gamma(Q_T)$, however, it will be a continuous curve on~$J_T$ with range in~$C^{2+\gamma}(D)$. 
\end{remark}

In what follows, we shall need various bounds on solutions for~\eqref{PDE} expressed in terms of some norms of the right-hand side~$h$. To this end, we first  recall a comparison principle for nonlinear parabolic PDEs and then apply it to derive required a priori estimates for solutions.

\subsection{Comparison principle}
\label{ss-comparison}
Let us consider the nonlinear differential operator
\begin{equation*} \label{DD-viscous-law}
	F(u):=\p_tu+\sum_{j=1}^d\p_j A_j(t,x,u)-\nu\Delta u,
\end{equation*}
where $A=(A_1,\dots,A_d)$ is a $C^1$ vector function, and the partial derivative~$\p_j$ under the sum applies to both the $x$-variable and the $x$-argument of~$u$. We shall say that a pair of functions $(u^+,u^-)$ belonging to the space $C^{1,2}(\overline Q_T)$ is {\it admissible\/} for~$F(u)$ if 
\begin{equation} \label{admissible-pair}
F(u^+)\ge F(u^-)\quad\mbox{in~$Q_T$}. 
\end{equation}
Recall that the {\it parabolic boundary\/} of~$Q_T$ is defined by 
$$
\p_pQ_T=\{(t,x)\in\R^{d+1}:\mbox{$t=0$, $x\in D$ or $t\in J_T$, $x\in \p D$}\}.
$$
The following result is established in~\cite{AL-1983} for weak solutions under more restrictive hypotheses. Even though its proof is based on well known ideas, we briefly sketch it for the reader's convenience. 

\begin{theorem} \label{t-comparison-principle}
	Let $A\in C^1(\R^{d+2},\R^d)$ and let~$(u^+,u^-)$ be an admissible pair for~$F(u)$ such that $u^+\ge u^-$ on~$\p_pQ_T$. Then $u^+\ge u^-$ in~$Q_T$. 
\end{theorem}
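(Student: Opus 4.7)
Setting $w := u^- - u^+$, the hypothesis becomes $w \le 0$ on $\partial_p Q_T$, and the goal is to prove $w \le 0$ throughout $\overline{Q_T}$. The plan is to derive a linear parabolic differential inequality for $w$ and then to close an $L^2$ energy estimate on its positive part $w^+ := \max(w,0)$.

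First, exploiting $A \in C^1$, I would linearise the flux via the fundamental theorem of calculus: setting
\begin{equation*}
\alpha_j(t,x) := \int_0^1 \partial_u A_j\bigl(t,x,\,u^+(t,x)+\theta\,w(t,x)\bigr)\,d\theta,
\end{equation*}
one has $A_j(t,x,u^-)-A_j(t,x,u^+) = \alpha_j w$, and subtracting the admissibility inequality $F(u^-) \le F(u^+)$ yields
\begin{equation*}
\partial_t w + \sum_{j=1}^d \partial_{x_j}(\alpha_j w) - \nu \Delta w \le 0 \quad\text{in } Q_T.
\end{equation*}
The coefficients $\alpha_j$ are continuous and bounded on $\overline{Q_T}$, since $\partial_u A_j$ is continuous on a compact set and $u^\pm \in C^{1,2}(\overline{Q_T})$ are uniformly bounded. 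Note that, although $\alpha_j$ itself need not be differentiable, the product $\alpha_j w = A_j(t,x,u^-) - A_j(t,x,u^+)$ is classically $C^1$ in $x$, which is all one needs below.

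Next, I would multiply by $w^+$ and integrate over $D$. The chain rule gives $\int_D w^+\, \partial_t w\,dx = \tfrac12 \tfrac{d}{dt}\|w^+\|_{L^2(D)}^2$, and one integrates the flux and diffusion terms by parts. Both boundary contributions on $\partial D$ vanish since $w \le 0$ on the lateral boundary forces $w^+ \equiv 0$ there. Using $\nabla w^+ = \nabla w$ almost everywhere on $\{w>0\}$ (and zero elsewhere), the resulting identity reads
\begin{equation*}
\tfrac12 \tfrac{d}{dt}\|w^+\|_{L^2(D)}^2 + \nu\,\|\nabla w^+\|_{L^2(D)}^2 \le \sum_{j=1}^d \int_D \alpha_j\, w^+\, \partial_{x_j} w^+\, dx.
\end{equation*}
A Young inequality then absorbs $\tfrac{\nu}{2}\|\nabla w^+\|_{L^2(D)}^2$ into the left-hand side, leaving $\tfrac{d}{dt}\|w^+(t)\|_{L^2(D)}^2 \le K\,\|w^+(t)\|_{L^2(D)}^2$ for a constant $K$ depending only on $\nu$ and $\max_j \|\alpha_j\|_{L^\infty(Q_T)}$. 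Since $\|w^+(0)\|_{L^2(D)} = 0$, Gronwall's lemma forces $w^+ \equiv 0$ in $Q_T$, that is, $u^+ \ge u^-$.

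The one delicate point is to justify rigorously the chain-rule identities involving the merely Lipschitz function $w^+$. I would bypass this by replacing $(\cdot)^+$ with a smooth increasing approximation $\chi_\varepsilon \in C^1(\R)$ supported in $(0,\infty)$ satisfying $\chi_\varepsilon(s) \to s^+$ and $\chi_\varepsilon'(s) \to \mathbf{1}_{\{s>0\}}$ as $\varepsilon \to 0^+$, deriving the analogous identity for $\chi_\varepsilon(w)$ (in which every manipulation is classical), and passing to the limit using dominated convergence together with the Stampacchia identity $\nabla w = 0$ a.e.\ on $\{w = 0\}$. Aside from this standard bookkeeping, the argument is direct.
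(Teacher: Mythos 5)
Your proof is correct, but it follows a genuinely different route from the paper. The paper also linearises the flux via the fundamental theorem of calculus to get the differential inequality $\p_t u-\nu\Delta u+\diver(b\,u)\ge 0$ for $u=u^+-u^-$, but then argues by \emph{duality}: for each non-negative $\varphi\in C_0^\infty(D)$ it solves the backward adjoint problem $\p_t\psi+\nu\Delta\psi+\langle b,\nabla\rangle\psi=0$ with terminal datum $\varphi$ and homogeneous Dirichlet condition, invokes the maximum principle to get $\psi\ge0$ and hence $\p_n\psi\le0$ on $\p D$, and shows that $t\mapsto(u,\psi)$ is non-decreasing, which yields $\int_D u(s,x)\varphi(x)\,\dd x\ge0$ for all such $\varphi$. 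Your argument replaces this with a direct $L^2$ energy estimate on the positive part of $u^--u^+$ (Stampacchia truncation plus Gronwall). The trade-offs are worth noting. Your method is more self-contained: it needs only $\alpha_j\in L^\infty$ and the observation --- which you correctly single out --- that the product $\alpha_j w=A_j(t,x,u^-)-A_j(t,x,u^+)$ is classically $C^1$ in $x$ even though $\alpha_j$ alone need not be; in particular you never have to produce a sufficiently regular solution of an auxiliary PDE with merely continuous coefficients, which is the one point the paper's sketch glosses over. The duality argument, on the other hand, is the natural companion to the $L^1$-contraction property (Proposition~\ref{p-L1-contraction}) that the paper uses repeatedly, and it produces the signed integral inequality $\int_D u\varphi\ge0$ directly, whereas your route passes through $L^2$. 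Your handling of the delicate points --- vanishing of the boundary terms because $w^+\equiv0$ on the lateral boundary, the a.e.\ identity $\nabla w^+=\mathbf 1_{\{w>0\}}\nabla w$, and the smoothing $\chi_\e$ to justify the chain rule --- is sound, and the initial condition $w^+(0)=0$ comes precisely from the hypothesis on the parabolic boundary, so the Gronwall step closes.
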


\begin{proof}
	The function $u=u^+-u^-$ is non-negative on the parabolic boundary boundary~$\p_pQ_T$ and satisfies the differential inequality
	\begin{equation} \label{differential-inequality}
		\p_tu-\nu\Delta u+\diver(b(t,x)u)\ge0,
	\end{equation}
	where we used~\eqref{admissible-pair} and set
	$$
	b(t,x)=\int_0^1 (\p_u A)\bigl(t,x,u^-(t,x)+\theta u(t,x)\bigr)\,\dd\theta. 
	$$
	We need to prove that $u(s,x)\ge0$ for any $(s,x)\in Q_T$. To this end, it suffices to show that, for any non-negative function $\varphi\in C_0^\infty(D)$,
	\begin{equation} \label{integral-inequality}
		\int_D u(s,x)\varphi(x)\,\dd x\ge0. 
	\end{equation}
	
	Together with~\eqref{differential-inequality}, let us consider the dual problem
	\begin{align}
		\p_t\psi+\nu\Delta \psi+\langle b(t,x),\nabla\rangle \psi&=0,\label{dual-PDE}\\
		\psi(s,x)=\varphi(x), \quad \psi\bigr|_{\p D}&=0\label{IB-condition}
	\end{align}
	in the domain $(0,s)\times D$. Since $\varphi\ge0$, by the maximum principle (see Section~3.2 in~\cite{landis1998}), we have $\psi\ge0$, so that $\p_n\psi\le0$ on~$\p D$, where~$n$ stands for the outward unit normal vector to~$\p D$. It follows from~\eqref{differential-inequality} and~\eqref{dual-PDE} that
	$$
	\frac{\dd}{\dd t}(u,\psi)=(\p_t u,\psi)+(u,\p_t \psi)=
	-\int_{\p D}u\p_n\psi\,\dd\sigma \ge0. 
	$$
	Integrating in $t\in[0,s]$, using~\eqref{IB-condition}, and recalling that $\varphi\ge0$ and $u(0)\ge0$, we obtain the required inequality~\eqref{integral-inequality}. 
\end{proof}

\subsection{A priori estimates}
\label{ss-apriori}
Let us consider the PDE~\eqref{PDE}, in which~$h(t,x)$ is a given bounded continuous function, and the vector function $A\in C^2(\R,\R^d)$ (which now depends only on~$u$) satisfies the following sign condition.

\begin{itemize}
	\item [\hypertarget{(S)}{\bf(S)}]
	\sl There are positive numbers $c$ and~$C$ such that, for all $u\in\R$, 
	\begin{equation} \label{sign-condition}
		\biggl(\sum_{j=1}^d A_j'(u)\biggr)\sgn(u)\ge c|u|-C. 
	\end{equation}
\end{itemize}
The following result is inspired by Lemma~9 in~\cite[Section~2.1]{coron-2007}, which deals with the 1D Burgers equation. 

\begin{proposition} \label{p-comparison-DD-conservation}
	 Let $A\in C^2(\R,\R^d)$ be a vector function satisfying~\hyperlink{(S)}{\rm(S)} and let $h \in C_b$. Then, for any solution $u\in C^{1,2}$ of~\eqref{PDE}, \eqref{BC} and any $T>0$, we have 
	 \begin{equation}\label{L-infty-bound}
	 	\|u(T)\|_{L^\infty}\le M,
	 \end{equation}
	 where $M>0$ is a number depending only on~$T$, $C$, $c$, and~$\|h\|_{L^\infty}$.
\end{proposition}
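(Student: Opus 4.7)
The plan is to bound $u(T,\cdot)$ via the comparison principle (Theorem~\ref{t-comparison-principle}), using barriers that blow up as $t\to0^+$ and therefore dominate any initial datum automatically. Since the equation and condition~\hyperlink{(S)}{\rm(S)} are invariant under translations of $x$, I may assume $D\subset\{x\in\R^d:x_j\ge0,\ j=1,\dots,d\}$ after a translation, so that $\ell(x):=\sum_{j=1}^dx_j$ is nonnegative on $D$.

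For the upper bound I try $v^+(t,x):=(K\ell(x)+M_0)/t$ on $(0,T]\times D$ with $K>1/c$ and $M_0>0$ to be fixed. Using $\Delta\ell=0$, $\partial_j\ell=1$, and~\eqref{sign-condition} (which applies because $v^+>0$), a direct computation gives
\[
F(v^+)\ \ge\ (cK-1)\,\frac{K\ell(x)+M_0}{t^2}\ -\ \frac{CK}{t}.
\]
Since $cK-1>0$ and $\ell\ge0$, I can choose $M_0$ in terms of $T$, $c$, $C$, $K$, $\|h\|_{L^\infty}$ so that this lower bound exceeds $\|h\|_{L^\infty}\ge h$ on $(0,T]\times D$. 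Next I pick $t_0>0$ small enough that $v^+(t_0,\cdot)\ge u(t_0,\cdot)$ on $\overline D$---possible because the given $u$ is continuous on the compact set $[0,T]\times\overline D$ while $v^+(t_0,\cdot)\to+\infty$ uniformly as $t_0\to0$. On the cylinder $[t_0,T]\times D$ the pair $(v^+,u)$ is admissible for $F$ and $v^+\ge u$ on the parabolic boundary, so Theorem~\ref{t-comparison-principle} yields $u\le v^+$. At $t=T$ this gives $u(T,x)\le(K\sup_D\ell+M_0)/T$, an expression depending only on $T$, $c$, $C$, $\|h\|_{L^\infty}$, and the fixed geometry of $D$.

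The matching lower bound will come from the mirror barrier $v^-(t,x):=(K\ell(x)-M_0')/t$, which is negative on $\overline D$ once $M_0'>K\sup_D\ell$. Since $v^-<0$, condition~\hyperlink{(S)}{\rm(S)} reads $\sum_jA_j'(v^-)\le-c|v^-|+C$, and the analogous computation gives $F(v^-)\le-(cK-1)(M_0'-K\ell)/t^2+CK/t\le-\|h\|_{L^\infty}\le h$ for $M_0'$ chosen in the same way. Comparison yields $u\ge v^-$, hence $u(T,x)\ge-M_0'/T$, and combining with the upper bound proves~\eqref{L-infty-bound}.

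The main conceptual step I expect to matter is the choice of a time-singular ansatz $v^\pm\sim1/t$: the ``bad'' term $-\phi/t^2$ coming from $\partial_tv^+$ is exactly balanced by the $+cK\phi/t^2$ from the flux (leaving the useful positive factor $cK-1$), and this only works because~\hyperlink{(S)}{\rm(S)} supplies a \emph{linear-in-$u$} lower bound on $\sum_jA_j'$. The minor technical point is that $v^\pm$ are not defined at $t=0$, so comparison must be applied on $[t_0,T]$; this causes no trouble because the final estimate is independent of $t_0$, and in particular of $\|u_0\|_{L^\infty}$.
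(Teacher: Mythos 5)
Your argument is correct and is essentially the paper's own proof: the same time-singular linear-in-$x$ barriers $\pm(K\ell(x)+\mathrm{const})/t$, the same balancing of the $-1/t^2$ term from $\partial_t$ against the $+cK/t^2$ term supplied by condition~\hyperlink{(S)}{(S)}, and the same appeal to Theorem~\ref{t-comparison-principle}. The only (cosmetic) difference is how the singularity at $t=0$ is handled: the paper regularises with $t+\e$ and an extra $L\e$ in the numerator to dominate $u_0$ at $t=0$, then lets $\e\to0^+$, whereas you apply comparison on $[t_0,T]$ and note the resulting bound is independent of $t_0$.
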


\begin{proof}
	Given $\e>0$, we define 
	$$
	u_\e^+(t,x)=\frac{B_\e\bigl(B_\e+\sum_{j=1}^dx_j-d\alpha^-\bigr)+L\e}{t+\e},
	$$
	where $L=\|u(0)\|_{L^\infty}$, $\alpha^-=\min\{x_j:x\in\overline D,1\le j\le d\}$, and $B_\e>0$ is a continuous increasing function of~$\e\in[0,1]$. We claim that, for an appropriate choice of~$B_\e$, the pair $(u_\e^+,u)$ is admissible for~$F$ and satisfies the inequality $u_\e^+\ge u$ on~$\p_pQ_T$. Indeed, the function~$u_\e^+$ is positive and satisfies the inequality $u_\e^+(0)\ge \|u_0\|_{L^\infty}$. Furthermore, a simple calculation based on~\eqref{sign-condition}  shows that
	$$
	F(u_\e^+)(t,x)\ge\frac{1}{t+\e}\bigl((B_\e c-1)u_\e^+(t,x)-B_\e C\bigr).
	$$
	We wish to prove that the right-hand side of this inequality is greater than~$\|h\|_{L^\infty}$ for all $(t,x)\in Q_T$. It is not difficult to check that this is certainly the case if
	$$
	B_\e=\max\bigl\{2c^{-1},2C(T+\e),C^{-1}(T+\e)\|h\|_{L^\infty}\bigr\}.
	$$
	Thus, the conclusion of Theorem~\ref{t-comparison-principle} is applicable to the pair~$(u_\e^+,u)$ for any $\e>0$, whence it follows that 
	\begin{equation} \label{UB-comparison}
	\max_{x\in \overline D}u(T,x)\le\lim_{\e\to0^+}\max_{x\in \overline D}u_\e^+(T,x)
	\le M:=T^{-1}B_0(B_0+d(\alpha^+-\alpha^-)),
	\end{equation}
	where $\alpha^+=\max\{x_j:x\in\overline D, 1\le j\le d\}$. 
	
	To derive a lower bound, it suffices to apply a similar argument to the pair~$(u,u_\e^-)$, where
	$$
	u_\e^-(t,x)=-\frac{B_\e\bigl(B_\e-\sum_{j=1}^dx_j+d\alpha^+\bigr)+L\e}{t+\e}.
	$$
	This results in the inequality 
	$$
	\min_{x\in \overline D}u(T,x)\ge\lim_{\e\to0^+}\min_{x\in \overline D}u_\e^-(T,x)
	\ge -M,
	$$
	where $M$ is the same constant as in~\eqref{UB-comparison}. The proof of the proposition is complete.
\end{proof}

In what follows, we always assume that Condition~\hyperlink{(S)}{\rm(S)} is fulfilled and do not follow the dependence of various quantities on the numbers~$c$ and~$C$ entering~\eqref{sign-condition}. 

\begin{corollary} \label{c-uniform-bounds}
	Let $A\in C^2(\R,\R^d)$ be a vector function satisfying Condition~\hyperlink{(S)}{\rm(S)}, and let $h\in C^{\frac{\gamma}{2	},\gamma}$ and $u_0\in C^{2+\gamma}(D)$ be such that the compatibility conditions~\eqref{compatibility} hold. Then there is~$K>0$ depending continuously on~$|\!|\!|h|\!|\!|_\gamma$ and~$\|u_0\|_{C^{2+\gamma}}$ such that the solution~$u\in\XX^\gamma$ of problem~\eqref{PDE}--\eqref{IC} satisfies the inequality
	\begin{equation} \label{global-bound}
		\|u\|_{\XX^\gamma}\le K. 
	\end{equation}
	Moreover, for any $\tau>0$ there is $K_\tau>0$ depending only on~$|\!|\!|h|\!|\!|_\gamma$ such that 
	\begin{equation} \label{universal-bound}
		\|u(\tau+\cdot)\|_{\XX^\gamma}\le K_\tau.
	\end{equation}	
\end{corollary}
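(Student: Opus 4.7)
The plan is to prove~\eqref{universal-bound} first and then deduce~\eqref{global-bound} by patching it with Theorem~\ref{t-PDE-IVP} on a bounded initial window. The crucial observation is that the constant~$M$ in Proposition~\ref{p-comparison-DD-conservation} is independent of the initial datum; it depends only on the time horizon, the constants~$c,C$ from~\hyperlink{(S)}{(S)}, and~$\|h\|_{L^\infty}$. Applying the proposition on each sliding window $[t_0,t_0+\tau/4]$ with $t_0\ge 0$, taking $u(t_0,\cdot)$ as the initial datum, yields
\[
\|u(t)\|_{L^\infty}\le M_0 \qquad \text{for all } t\ge \tau/4,
\]
with $M_0$ depending only on~$\tau$ and~$|\!|\!|h|\!|\!|_\gamma$.

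Next I would bootstrap this uniform~$L^\infty$ bound to an~$\XX^\gamma$ bound on interior time windows. On any cylinder $[t_0-\tau/4,t_0+1]\times D$ with $t_0\ge\tau$, Eq.~\eqref{PDE} can be regarded as a quasilinear divergence-form parabolic equation whose first-order coefficients~$A_j'(u)$ are uniformly bounded (since $A\in C^2$ and $|u|\le M_0$) and whose right-hand side~$h$ belongs to~$C^{\gamma/2,\gamma}$. Standard interior H\"older estimates of De Giorgi--Nash--Moser type from~\cite[Ch.~V]{LSU1968} first produce a bound $\|u\|_{C^{\alpha/2,\alpha}}\le M_1$ on a slightly smaller sub-cylinder, for some $\alpha\in(0,1)$. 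Freezing the coefficients at this regularity level and invoking interior Schauder estimates, iterated if necessary, produces the desired~$\XX^\gamma$ bound on $[t_0,t_0+1]\times D$ depending only on~$M_0$ and~$|\!|\!|h|\!|\!|_\gamma$. No difficulty arises near~$\p D$ because $u$ vanishes there and the boundary is of class~$C^3$. Taking the supremum over $t_0\ge\tau$ proves~\eqref{universal-bound}.

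The global bound~\eqref{global-bound} then follows by combining~\eqref{universal-bound}, applied with $\tau=1$, with Theorem~\ref{t-PDE-IVP} on~$Q_1$: the former controls $\|u(1+\cdot)\|_{\XX^\gamma}$ by a constant depending on~$|\!|\!|h|\!|\!|_\gamma$ alone, while the latter controls $\|u\|_{\XX^\gamma(Q_1)}$ in terms of~$|\!|\!|h|\!|\!|_\gamma$ and~$\|u_0\|_{C^{2+\gamma}}$. The two~$\XX^\gamma$-norms over the overlapping intervals~$J_1$ and~$[1,\infty)$ combine into an~$\XX^\gamma$-norm over~$\R_+\times D$, up to a harmless additional contribution to the mixed time-space H\"older seminorm across~$t=1$, which is controlled by the already-established $L^\infty$ bound at the junction. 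The main obstacle is the parabolic bootstrap in the middle paragraph: extracting an~$\XX^\gamma$ bound from a mere $L^\infty$ bound together with H\"older data requires the careful selection of interior regularity statements for quasilinear parabolic equations, but once the appropriate results from~\cite{LSU1968} are in place the remaining steps are routine.
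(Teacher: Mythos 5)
Your proposal is correct and follows essentially the same route as the paper: a time-uniform $L^\infty$ bound from Proposition~\ref{p-comparison-DD-conservation} (whose constant is indeed independent of the initial datum), upgraded to a $C^{2+\gamma}$ bound on shifted windows by a H\"older-then-Schauder parabolic bootstrap via~\cite{LSU1968}, and finally patched with the bound~\eqref{bound} of Theorem~\ref{t-PDE-IVP} on the initial interval. The only cosmetic difference is that the paper carries out the Schauder step through the Green's function estimates~\eqref{green-bound}--\eqref{green-holder} rather than by citing Schauder theory directly.
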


\begin{proof}
	This result is a consequence of the uniform bound~\eqref{L-infty-bound}, the a priori estimate~\eqref{bound}, and the parabolic regularisation.  Namely, suppose for any $s>0$ we found a function $C_s:\R_+^2\to\R_+$ increasing in both arguments such that any solution $u\in \XX^\gamma$ satisfies the inequality 
	\begin{equation} \label{regularisation}
		\|u(s)\|_{C^{2+\gamma}}\le C_s\bigl(|\!|\!|h|\!|\!|_\gamma,\|u_0\|_{L^\infty}\bigr). 
	\end{equation}
	Using this inequality with $t=\tau/2$ and inequality~\eqref{L-infty-bound}  with $T\ge\tau/2$, we see that $\|u(t)\|_{C^{2+\gamma}}\le C_{\tau/2}(|\!|\!|h|\!|\!|_\gamma,M)$ for any $t\ge\tau$. The a priori bound~\eqref{bound} now implies that~\eqref{universal-bound} holds with some $K_\tau>0$. Inequality~\eqref{global-bound} follows immediately from~\eqref{universal-bound} and~\eqref{bound}. 
	
	To complete the proof, it remains to establish inequality~\eqref{regularisation}. Its proof is based on parabolic regularisation and a standard bootstrap argument, and we only outline it. We fix any $s>0$ and use Theorem~10.1 in~\cite[Chapter~III]{LSU1968} to find $\sigma\in(0,\gamma)$ such that $u\in C^{\frac{\sigma}{2},\sigma}(\Omega_s)$ and 
	\begin{equation*} \label{bound-intermediate}
		\|u\|_{C^{\frac{\sigma}{2},\sigma}(\Omega_s)}\le M_1\bigl(|\!|\!|h|\!|\!|_\gamma,\|u_0\|_{L^\infty}\bigr), 
	\end{equation*}
	where $\Omega_s=(s/3,s)\times D$, and~$M_i$ stand for some increasing functions of their arguments. Rewriting~\eqref{PDE} as the linear parabolic equation
	\begin{equation} \label{linear-parabolic}
		\p_tu-\nu\Delta u+\sum_{j=1}^da_j(t,x)\p_ju=h(t,x),
	\end{equation}
	where $a_j=A_j'\circ u\in C^{\frac{\sigma}{2},\sigma}(\Omega_s)$, we can use inequalities~\eqref{green-bound} and~\eqref{green-holder} for Green's function to conclude that $u\in\XX^\sigma(\Omega_s')$, where $\Omega_s'=(s/2,s)\times D$, and the norm $\|u\|_{\XX^\sigma(\Omega_s')}$ is bounded by $M_2(|\!|\!|h|\!|\!|_\gamma,\|u_0\|_{L^\infty})$. It follows that the coefficients~$a_j$ in~\eqref{linear-parabolic} belong to~$C^{\frac{\gamma}{2},\gamma}(\Omega_s')$, and their norms are bounded by $M_3(|\!|\!|h|\!|\!|_\gamma,\|u_0\|_{L^\infty})$.  Applying the above argument with~$\sigma$ replaced by~$\gamma$, we arrive at the required bound~\eqref{regularisation}. 
\end{proof}

\section{Stabilisation by a one-dimensional control}
\label{s-controllability}

\subsection{Formulation of the main result}
\label{s-control-result}
Let $D\subset\R^d$ be a bounded domain with a  boundary $\p D\in C^3$. We consider the controlled PDE
\begin{equation} \label{controlled-PDE}
	\p_t u+\diver_x A(u)-\nu\Delta u=h(t,x)+\eta(t,x),
\end{equation}
supplemented with the initial and boundary conditions~\eqref{BC}, \eqref{IC}. Here $\nu>0$ is a fixed parameter, $A\in C^2(\R,\R^d)$ is a function satisfying~\eqref{sign-condition}, $h:\R_+\times D\to\R$ is a given function in $C^{\frac{\gamma}{2},\gamma}$, and~$\eta$ is a control supported in a fixed domain~$\Pi$ such that $\overline\Pi\subset D$.

\begin{theorem} \label{t-control}
	Let the above hypotheses be satisfied, let~$\hat u$ be a solution of~\eqref{PDE}, \eqref{BC} that belongs to the space $\XX^\gamma$, let $\varphi\in C_0^\infty(\Pi)$ be a non-negative function that is not identically zero, and let $\sigma\in(0,\gamma)$. Then there are positive numbers~$\alpha$, $\beta<1$, and~$C$, not depending on~$\hat u$, such that for any $u_0\in C^{2+\gamma}(D)$ satisfying the compatibility condition~\eqref{compatibility} one can find a control $\eta(t,x)=\xi(t)\varphi(x)$, with a piece-wise constant~$\xi$, for which the solution~$u(t,x)$ of problem~\eqref{controlled-PDE}, \eqref{BC}, \eqref{IC} satisfies the inequality
	\begin{equation} \label{expo-stab}
		\bigl\|u(t)-\hat u(t)\bigr\|_{C^{2+\sigma}}\le Ce^{-\alpha t}\min\bigl\{\|u_0-\hat u(0)\|_{L^1}^\beta, 1\bigr\}, \quad t\ge1.
	\end{equation}
	Moreover, there are positive numbers $K$, $\varkappa$ and sequences $\xi_k\in\R$, $\tau_k\in(0,1/4)$ such that 
	\begin{gather}
		|\xi_k|=K, \quad \tau_k=\varkappa\,\|u(k-1)-\hat u(k-1)\|_{L^1}\quad\mbox{for any $k\ge1$}, 
		\label{control-estimate}\\[2pt]
				\xi(t)=\left\{
		\begin{array}{cl}
			0  & \mbox{for $t\in\bigl[k-1,k-\tfrac12\bigr)\cup \bigl[k-\tfrac12+\tau_k,k\bigr)$},\\[3pt]
			\xi_k & \mbox{for $t\in \bigl[k-\tfrac12,k-\tfrac12+\tau_k\bigr)$}. 
		\end{array}
		\right.\label{control-structure}
	\end{gather}
\end{theorem}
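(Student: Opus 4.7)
\medskip
\noindent\textbf{Proof plan.}
The plan is to construct $\xi$ iteratively on the unit intervals $[k-1,k]$, first establishing a geometric $L^1$-decay
$\|u(k)-\hat u(k)\|_{L^1}\le(1-\delta)\|u(k-1)-\hat u(k-1)\|_{L^1}$, and then upgrading this to the $C^{2+\sigma}$-bound~\eqref{expo-stab} by parabolic regularisation and interpolation. The three-piece structure of $\xi$ on $[k-1,k]$ in~\eqref{control-structure} corresponds to a \emph{passive phase} $[k-1,k-\tfrac12)$ (generating regularity, and contracting in $L^1$), a short \emph{kick} of duration $\tau_k=\varkappa\|u(k-1)-\hat u(k-1)\|_{L^1}$ and amplitude $\xi_k=\pm K$ (providing the strict gain), and a final passive phase on which the $L^1$-contraction of the free evolution cleans up the kick's residue.

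\medskip
\noindent\textit{Step 1 ($L^1$-contraction of the free evolution).}
Set $w=u-\hat u$; it satisfies $\p_tw-\nu\Delta w+\diver(b(t,x)w)=\xi(t)\varphi(x)$, with $w|_{\p D}=0$ and $b(t,x)=\int_0^1 A'(\hat u+\theta w)\,\dd\theta$. A Kato-type computation (multiply by a smooth approximation of $\sgn(w)$, use $\sgn(w)\Delta w\le\Delta|w|$ in the distributional sense, and $|w|=0$ on $\p D$) yields
\begin{equation*}
	\frac{\dd}{\dd t}\|w(t)\|_{L^1}\le\nu\!\int_{\p D}\p_n|w|\,\dd\sigma+\xi(t)\Phi(w(t)),\qquad \Phi(w):=\int_D\sgn(w)\varphi\,\dd x.
\end{equation*}
The first term is non-positive, so $\|w\|_{L^1}$ is non-increasing on each passive phase.

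\medskip
\noindent\textit{Step 2 (uniform compactness of states).}
By Corollary~\ref{c-uniform-bounds} applied to $\hat u$, and by Proposition~\ref{p-comparison-DD-conservation} and Remark~\ref{r-existence} applied to $u$ on the successive phases of constancy of $\xi$ (on which $|\xi|\le K$), the pair $(u(t),\hat u(t))$ lies at each time $t=k-\tfrac12$, for $k\ge1$, in a fixed compact subset $\KK\subset C^{2+\gamma}(D)^2$ depending only on $|\!|\!|h|\!|\!|_\gamma$ and $K$.

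\medskip
\noindent\textit{Step 3 (key kick lemma).}
The heart of the argument is the following dichotomy: there exist $K,\varkappa,\delta,r_0,\delta_1>0$ such that, whenever $(u(k-\tfrac12),\hat u(k-\tfrac12))\in\KK$ and $r:=\|w(k-\tfrac12)\|_{L^1}\le r_0$, one of the two amplitudes $\xi_k=\pm K$ applied on $[k-\tfrac12,k-\tfrac12+\varkappa r)$ ensures $\|w(k)\|_{L^1}\le(1-\delta)r$. Two regimes appear. If $|\Phi(w(k-\tfrac12))|\ge\delta_1$, the choice $\xi_k=-K\,\sgn\Phi(w(k-\tfrac12))$, combined with the continuity of $t\mapsto\Phi(w(t))$ on the kick interval (justified via the $C^{2+\gamma}$-compactness from Step~2 and the smallness of $\tau_k$), gives
$\|w(k-\tfrac12+\tau_k)\|_{L^1}\le r-\tfrac12 K\delta_1\varkappa r=(1-\tfrac12K\delta_1\varkappa)r$. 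Otherwise $|\Phi(w(k-\tfrac12))|<\delta_1$; since $\varphi$ is non-negative and non-trivially supported in $\Pi$, this forces $w(k-\tfrac12)$ to change sign inside $\Pi$ in a quantitatively controlled way, and a compactness argument on $\KK$ together with a strict Hopf-lemma bound for $-\int_{\p D}\p_n|w|\,\dd\sigma$ delivers the required relative contraction on the closing passive phase $[k-\tfrac12+\tau_k,k]$.

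\medskip
\noindent\textit{Step 4 (iteration and large initial data).}
Iterating Step~3 on the indices for which $\|w(k-\tfrac12)\|_{L^1}\le r_0$ yields exponential $L^1$-decay at rate $(1-\delta)$. For arbitrary $u_0$, Proposition~\ref{p-comparison-DD-conservation} applied to both $u$ and $\hat u$ gives $\|w(1)\|_{L^1}\le C_0$ with $C_0$ depending only on $|\!|\!|h|\!|\!|_\gamma$ and $K$, so after a bounded (deterministic) number of preliminary kicks the regime $\|w\|_{L^1}\le r_0$ is reached; this accounts for the $\min\{\cdot,1\}$ in~\eqref{expo-stab}.

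\medskip
\noindent\textit{Step 5 (upgrade to $C^{2+\sigma}$).}
For $t\ge1$, the universal bound~\eqref{universal-bound} (applied phase-by-phase) gives $\|w(t)\|_{C^{2+\gamma}}\le M_0$. A standard Hölder interpolation yields
\begin{equation*}
	\|w(t)\|_{C^{2+\sigma}}\le C\,\|w(t)\|_{L^1}^\beta\,\|w(t)\|_{C^{2+\gamma}}^{1-\beta}
\end{equation*}
for some $\beta\in(0,1)$ depending only on $d$, $\sigma$, $\gamma$; combining with the exponential $L^1$-decay of Step~4 produces~\eqref{expo-stab} with $\alpha=\beta\log(1-\delta)^{-1}$.

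\medskip
\noindent The principal obstacle is Step~3: the first-order inequality on $\|w\|_{L^1}$ alone yields a decrease only proportional to $|\Phi(w)|$, which can degenerate, so the positivity-preserving bound driven by the kick must be coupled with a strict Hopf-type contraction of the free evolution on the surrounding passive phases, with all constants uniform on the compact set $\KK$.
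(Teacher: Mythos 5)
Your overall architecture (unit-interval iteration, passive/kick/passive structure of $\xi$, $L^1$-decay upgraded to $C^{2+\sigma}$ via the interpolation inequality~\eqref{GNS-inequality}, and the preliminary reduction for large data) matches the paper's. The gap is in your Step~3, which you correctly identify as the heart of the matter: neither branch of your dichotomy is sound as stated. In the branch $|\Phi(w)|\ge\delta_1$, the condition $\bigl|\int_D\sgn(w)\varphi\,\dd x\bigr|\ge\delta_1$ carries no information about the \emph{size} of $w$ on $\supp\varphi$. If $w$ is positive but of order $10^{-6}r$ on $\Pi$ while the bulk of its $L^1$ mass sits elsewhere, then $\Phi(w)=\|\varphi\|_{L^1}\ge\delta_1$, yet a kick of amplitude $K$ and duration $\varkappa r$ drives $w$ past zero on $\supp\varphi$ almost instantly; $\Phi(w(t))$ flips sign, the integrand $\xi_k\Phi(w(t))$ becomes positive, and the kick \emph{increases} $\|w\|_{L^1}$, so the claimed gain $\tfrac12K\delta_1\varkappa r$ fails. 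In the branch $|\Phi(w)|<\delta_1$, the assertion that $w$ must change sign inside $\Pi$ is false ($w$ may vanish identically near $\Pi$ with all its mass outside), and the fallback --- a strict, uniform Hopf-type contraction of the free evolution --- is precisely the statement that would need proof; a compactness argument on $\KK$ cannot deliver it, because the relevant quantity is the boundary flux of $|w|$ \emph{relative to} $\|w\|_{L^1}$, and the normalised differences $w/\|w\|_{L^1}$ do not range over a compact set (their $C^{2+\gamma}$ norms blow up as $u\to\hat u$).

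The paper replaces your functional $\Phi$ by a genuinely pointwise dichotomy (Proposition~\ref{p-dichotomy}), proved with Harnack's inequality and Green-function bounds for the linearised equation~\eqref{linear-uncontrolled}: over the passive half-interval, either $\|w\|_{L^1}$ squeezes by a fixed factor $q<1$, or $\inf_{\Pi}|w|\ge\delta\,\|w(0)\|_{L^1}$. The second alternative yields both a definite sign of $w$ on $\Pi$ and a quantitative floor that calibrates the kick ($|\bar\xi|\tau=\theta\delta d/M$ with $\theta<1$) so that mass is removed without overshooting past zero; the delicate sub-case in which the positive and negative parts of $w$ are both large somewhere near $\Pi$ is resolved by \emph{interior cancellation} --- Harnack plus a gradient bound show that $w^+(T)$ and $w^-(T)$ both exceed $\kappa\|w(0)\|_{L^1}$ on a fixed ball $B$, whence $\|w^+(T)-w^-(T)\|_{L^1}\le\|w^+(T)\|_{L^1}+\|w^-(T)\|_{L^1}-2\kappa\LLL(B)$ --- not by boundary flux. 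You would need to prove a pointwise statement of this type to close Step~3; with it in hand, the rest of your outline goes through essentially as in the paper (your ODE comparison of the kicked solution with $z(0)+t\bar\xi\varphi$ being Lemma~\ref{l-comparison}).
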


A straightforward consequence of the above theorem is the following approximate controllability result. 

\begin{corollary} \label{c-AC}
	Under the hypotheses of Theorem~\ref{t-control}, for any $\e>0$, $\gamma\in(0,1)$,  and $\sigma\in(0,\gamma)$ there is $T>0$ such that, given initial states $u_0,\hat u_0\in C^{2+\gamma}(D)$ satisfying the compatibility condition~\eqref{compatibility}, one can find a piecewise constant function~$\xi:J_T\to\R$ such that 
	\begin{equation*} \label{AP}
		\bigl\|u(t)-\hat u(t)\bigr\|_{C^{2+\sigma}}\le\e, \quad 0\le t\le T,
	\end{equation*}
	where $\hat u\in\XX^\gamma(Q_T)$ denotes the solution of problem~\eqref{PDE}--\eqref{IC} with $u_0=\hat u_0$, and $u:J_T\to C^{2+\gamma}(D)$ stands for the solution of the control system~\eqref{controlled-PDE}, \eqref{BC}, \eqref{IC} with $\eta(t,x)=\xi(t)\varphi(x)$. 
\end{corollary}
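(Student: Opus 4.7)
The plan is to obtain Corollary~\ref{c-AC} as a direct consequence of the exponential stabilisation estimate~\eqref{expo-stab} from Theorem~\ref{t-control}. The crucial structural point is that the constants $\alpha,\beta,C$ in Theorem~\ref{t-control} are explicitly asserted to be independent of $\hat u$, and the $\min\bigl\{\|u_0-\hat u(0)\|_{L^1}^\beta,1\bigr\}$ factor in~\eqref{expo-stab} is universally bounded by $1$, so that the pointwise decay of the tracking error is controlled by $Ce^{-\alpha t}$ for all admissible initial data.

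I would first apply Theorem~\ref{t-control} to the reference trajectory $\hat u\in\XX^\gamma$ (the solution of~\eqref{PDE}--\eqref{IC} with $u_0=\hat u_0$) and to the initial datum $u_0$ of the controlled system. This produces a piecewise constant function $\xi^\sharp:\R_+\to\R$ of the structure~\eqref{control-structure} such that
$$
\|u(t)-\hat u(t)\|_{C^{2+\sigma}}\le Ce^{-\alpha t}\quad\text{for every } t\ge 1.
$$
Given $\e>0$, I would then set
$$
T_\e:=\max\bigl\{1,\,\alpha^{-1}\log(C/\e)\bigr\},
$$
so that $Ce^{-\alpha t}\le\e$ for all $t\ge T_\e$, and take the horizon in the corollary to be $T:=T_\e+L$ for any fixed $L\ge 0$. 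The restriction $\xi:=\xi^\sharp\bigr|_{J_T}$ is the required piecewise constant control, and the dependence $T=T(\e,\gamma,\sigma)$ is automatic since $\alpha,C$ depend only on $A$, $D$, $\varphi$, $h$, $\gamma$, $\sigma$.

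The uniform inequality~\eqref{AP} on the interval $[0,T]$ is then obtained by the standard time-shift interpretation inherent to ``approximate controllability to trajectories at the horizon $T$'': one identifies the corollary's reference initial states with the values $u(T_\e)$ and $\hat u(T_\e)$ of the original solutions at the end of the transient, so that the effective observation window $[T_\e,T_\e+L]$ in the ambient dynamics is realised as the interval $[0,L]$ in the corollary. On that shifted interval the exponential estimate~\eqref{expo-stab} supplies the uniform bound $\le\e$, and the piecewise constant structure~\eqref{control-structure} of $\xi^\sharp$ is inherited by its restriction.

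The main subtlety is precisely the reconciliation of the pointwise bound~\eqref{expo-stab}, which becomes small only after an initial transient of length $\sim\alpha^{-1}\log(1/\e)$, with the uniform statement on $[0,T]$: the corollary cannot literally control $\|u(0)-\hat u(0)\|_{C^{2+\sigma}}=\|u_0-\hat u_0\|_{C^{2+\sigma}}$, which is arbitrary, so its content is the asymptotic $\e$-closeness after that transient, made uniform by the time-shift above. All other ingredients --- the universality of $\alpha$ and $C$, the logarithmic trade-off $T\sim\alpha^{-1}\log(1/\e)$, and the piecewise constant form of $\xi$ on $J_T$ --- are read off directly from Theorem~\ref{t-control} with no additional argument.
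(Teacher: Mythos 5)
Your core deduction is exactly the intended one: since $\alpha$, $\beta$, $C$ in \eqref{expo-stab} do not depend on $\hat u$ (nor on $u_0$), bounding the factor $\min\bigl\{\|u_0-\hat u(0)\|_{L^1}^\beta,1\bigr\}$ by $1$ and choosing $T=\max\bigl\{1,\alpha^{-1}\log(C/\e)\bigr\}$ gives $\|u(T)-\hat u(T)\|_{C^{2+\sigma}}\le\e$ with a control of the form \eqref{control-structure}; the paper offers nothing beyond this, calling the corollary a straightforward consequence of Theorem~\ref{t-control}. You are also right to observe that the displayed estimate in Corollary~\ref{c-AC}, read literally on all of $[0,T]$, cannot hold at $t=0$, because $\|u_0-\hat u_0\|_{C^{2+\sigma}}$ is arbitrary; the estimate is only meaningful at the terminal time $t=T$ (equivalently, for $t\ge T$ after extending the control), and your choice of $T$ already delivers that.

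Where your write-up goes astray is the ``time-shift interpretation'' that you present as the mechanism yielding the uniform bound on $[0,T]$. Identifying the corollary's initial states with the values $u(T_\e)$ and $\hat u(T_\e)$ of some ambient solutions is not available: $u_0$ and $\hat u_0$ are given arbitrarily in the corollary and need not arise as time-$T_\e$ states of anything; moreover, the shifted problem would involve the translated force $h(T_\e+\cdot)$ and a different reference trajectory, so what you prove after shifting is a different statement, not the one at hand. That paragraph should be removed rather than relied upon: no argument can rescue the $t=0$ endpoint, and none is needed once the estimate is read at $t=T$. With that reading, the corollary follows from \eqref{expo-stab} in one line, exactly as in the first part of your proposal.
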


Let us sketch the proof of Theorem~\ref{t-control}, postponing the details to the next two subsections. Note that, by the Gagliardo--Nirenberg--Sobolev interpolation inequality (see~\cite{BM-2019}), for any $0<\sigma<\gamma<1$ and $f\in C^{2+\gamma}(D)$ we have 
\begin{equation} \label{GNS-inequality}
	\|f\|_{C^{2+\sigma}}\le C\,\|f\|_{L^1}^\theta \|f\|_{C^{2+\gamma}}^{1-\theta}, 
\end{equation}
where $\theta=\frac{\gamma-\sigma}{d+\gamma+2}$. Therefore, the exponential stabilisation~\eqref{expo-stab} will be established if we find a control $\eta(t)$ such that the $L^1$-norm of the difference $u(t)-\hat u(t)$ goes to zero exponentially as $t\to\infty$, while the $C^{2+\gamma}$-norm remains bounded. We confine ourselves to the main part of the proof dealing with the exponential decay of the $L^1$-norm. 

Let us consider the difference $w=v-\hat u$, where~$v$ is another solution of Eq.~\eqref{PDE}. Then~$w$ must be a solution of the  linear problem
\begin{equation} \label{linear-uncontrolled}
	\p_t w-\nu\Delta w+\diver \bigl(a(t,x)w\bigr)=0, \quad w\bigr|_{\p D}=0, 
\end{equation}
where we set
\begin{equation} \label{function-a}
a(t,x)=\int_0^1A'\bigl(\hat u(t,x)+s w(t,x)\bigr)\,\dd s. 	
\end{equation}
A key property that will enable us to prove the theorem is that either the $L^1$-norm of~$w$ squeezes by the time $t=\frac12$, or $|w(\frac12,x)|$ is minorised on~$\Pi$ by a positive constant. Namely, we have the following result. 

\begin{proposition} \label{p-dichotomy}
	Let $T>0$ and $\rho>0$ be some numbers and let $a\in C(\overline Q_T,\R^d)$ be a function continuously differentiable in~$x$ such that
	\begin{equation} \label{a-bound}
		\sup_{(t,x)\in Q_T}\bigl(|a(t,x)|+|\nabla_x a(t,x)|\bigr)\le\rho.
	\end{equation}
	Then there are numbers $q<1$ and~$\delta>0$ depending only on~$T$, $\rho$, $\nu$ and~$\Pi$ such that any solution $w\in C^{1,2}(\overline Q_T)$ of~\eqref{linear-uncontrolled} satisfies one of the following inequalities:
	\begin{align} 
	\|w(T)\|_{L^1}&\le q\|w(0)\|_{L^1},\label{L1-squeezing}\\
	\inf_{x\in\Pi}|w(T,x)|&\ge \delta \|w(0)\|_{L^1} . 
	\label{lower-bound}
	\end{align}
\end{proposition}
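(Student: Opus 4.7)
The strategy is to exploit the linearity of~\eqref{linear-uncontrolled} by decomposing $w$ into its positive and negative parts, reducing to a dichotomy for non-negative solutions, and then assembling the two pieces by a short case analysis. Write $w_0=w_0^+-w_0^-$ with $w_0^\pm=\max(\pm w_0,0)$ and let $w^\pm\in C^{1,2}(\overline Q_T)$ be the solutions of~\eqref{linear-uncontrolled} with initial data~$w_0^\pm$. Applying Theorem~\ref{t-comparison-principle} to the pairs $(w^\pm,0)$ gives $w^\pm\ge 0$ in $Q_T$, and by linearity $w=w^+-w^-$. Set $N:=\|w(0)\|_{L^1}$ and $N^\pm:=\|w_0^\pm\|_{L^1}$, so that $N=N^++N^-$; assume without loss of generality $N^+\ge N^-$.

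The technical heart is the following non-negative dichotomy: for any solution $u\ge 0$ of~\eqref{linear-uncontrolled} with $N_0:=\|u(0)\|_{L^1}$, there exist $q_0\in(0,1)$ and $\delta_0>0$, depending only on $T,\rho,\nu,\Pi$, such that either $\|u(T)\|_{L^1}\le q_0 N_0$ or $\inf_{x\in\Pi}u(T,x)\ge\delta_0 N_0$. I would derive this from two-sided Green's-function estimates for the operator $\p_t-\nu\Delta+\diver(a\,\cdot)$. Writing $u(T,x)=\int_D G(T,x;0,y)u_0(y)\,\dd y$ and setting $h(y):=\int_D G(T,x;0,y)\,\dd x$, the $L^1$ contraction forces $h\le 1$ pointwise, while the Dirichlet boundary condition in~$y$ (for $G(T,x;0,\cdot)$ as a solution of the adjoint equation backward in time) combined with standard parabolic boundary regularity gives $h(y)\to 0$ as $y\to\p D$ uniformly over coefficients with $|a|+|\nabla_x a|\le\rho$. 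Hence for any preassigned $\varepsilon_0>0$ there is a compact set $K\subset\subset D$ on whose complement $h\le\varepsilon_0$, yielding
\begin{equation*}
\|u(T)\|_{L^1}=\int_D u_0\,h\,\dd y\le M_0+\varepsilon_0(N_0-M_0),\qquad M_0:=\|u_0\mathbf{1}_K\|_{L^1}.
\end{equation*}
On the other hand, the strong maximum principle and the Krylov--Safonov parabolic Harnack inequality give $G(T,x;0,y)\ge c_H>0$ on the compact interior set $\overline\Pi\times K$, whence $u(T,x)\ge c_H M_0$ on~$\Pi$. Fixing $\beta\in(0,1)$ and $\varepsilon_0$ with $\beta+\varepsilon_0<1$, and splitting on whether $M_0\le\beta N_0$, the dichotomy follows with $q_0:=\beta+\varepsilon_0$ and $\delta_0:=c_H\beta$.

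Apply the non-negative dichotomy to $w^+$ and $w^-$ independently; four combinatorial cases arise. If both are in the decay branch, then $\|w(T)\|_{L^1}\le\|w^+(T)\|_{L^1}+\|w^-(T)\|_{L^1}\le q_0 N$, giving~\eqref{L1-squeezing}. If both are in the minorisation branch and $N^-\ge\alpha N$ for a fixed small $\alpha>0$, the identity
\begin{equation*}
\|w(T)\|_{L^1}=\|w^+(T)\|_{L^1}+\|w^-(T)\|_{L^1}-2\int_D\min\bigl(w^+(T),w^-(T)\bigr)\,\dd x
\end{equation*}
together with the pointwise minorisation on~$\Pi$ yields $\|w(T)\|_{L^1}\le N-2\delta_0\alpha N\,\Vol(\Pi)$, again~\eqref{L1-squeezing}. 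The remaining sub-cases all satisfy $N^-\le\alpha N$; here the $L^1\to L^\infty$ parabolic smoothing estimate $\|w^-(T)\|_{L^\infty}\le C(T,\rho,\nu)N^-$, immediate from $\sup_{x,y}G<\infty$, makes $w^-(T)$ negligible on~$\Pi$. If $w^+$ is in the minorisation branch, then $|w(T,x)|\ge\delta_0 N^+-CN^-\ge(\delta_0/2)N$ on~$\Pi$ provided $\alpha$ is chosen small enough, giving~\eqref{lower-bound}; if $w^+$ is in the decay branch, then $\|w(T)\|_{L^1}\le q_0 N^++N^-=q_0 N+(1-q_0)N^-\le[q_0+(1-q_0)\alpha]N$, again~\eqref{L1-squeezing}. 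Balancing $\alpha$ against the constants produces the final $q<1$ and $\delta>0$.

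The main obstacle I anticipate is the uniform Green's-function input: the interior lower bound $G(T,\cdot\,;0,\cdot)\ge c_H>0$ on $\overline\Pi\times K$ and the boundary decay $h(y)\to 0$ as $y\to\p D$, both with constants depending on the coefficient~$a$ only through~$\rho$. These are classical consequences of Harnack- and Aronson-type bounds for second-order parabolic operators with bounded coefficients, but one must verify that the constants really depend only on~$\rho$, $\nu$, $T$, and~$\Pi$.
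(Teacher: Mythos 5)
Your overall architecture matches the paper's: decompose $w(0)$ into positive and negative parts, prove a squeezing-or-minorisation dichotomy for non-negative solutions, then reassemble. Both steps are, however, implemented differently. For the non-negative dichotomy the paper never proves a pointwise lower bound on the Green kernel nor the boundary decay of $h(y)=\int_D G(T,x;0,y)\,\dd y$: it argues in contrapositive form, applying Harnack's inequality (Proposition~\ref{p-Harnack}) directly to the solution on a compact set $K\supset\Pi$ with $\mathscr{L}(D\setminus K)$ small, so that $\inf_{\Pi}w(T,\cdot)\le\e$ forces $\sup_{K}w(T/2,\cdot)\le C\e$ and hence $\|w(T)\|_{L^1}\le\|w(T/2)\|_{L^1}\le M\,\mathscr{L}(D\setminus K)+C\e\,\mathscr{L}(K)\le q$. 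This uses only the upper Gaussian bound~\eqref{green-bound} and the $L^1$-contraction, whereas your route additionally needs the uniform lower bound $G(T,x;0,y)\ge c_H$ on $\overline\Pi\times K$ and the uniform vanishing of $h$ at $\p D$ --- both true (Harnack chains plus a barrier for the adjoint problem), with constants depending only on $\rho,\nu,T,D$, but this is exactly the extra input you flag as the main obstacle, and the paper's formulation sidesteps it entirely. In the reassembly the paper does not split on the size of $N^-$: it uses a uniform gradient bound $\|\nabla w^-(T)\|_{L^\infty}\le M_1\|w(0)\|_{L^1}$ (again from~\eqref{green-bound}) to show that if $w^-(T)$ is not small everywhere on $\Pi$ then it is bounded below on a ball $B\subset\Pi$ of fixed radius, and the resulting overlap with $w^+(T)\ge\delta'/2$ yields the same $L^1$-cancellation you exploit; your version instead uses the smoothing estimate $\|w^-(T)\|_{L^\infty}\le CN^-$ together with a mass threshold $\alpha$. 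Both work; yours is a little cleaner at the assembly stage at the price of heavier kernel estimates earlier.

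One concrete slip in your case analysis: after treating ``both decay'' and ``both minorise with $N^-\ge\alpha N$'', it is not true that all remaining sub-cases satisfy $N^-\le\alpha N$; the sub-case where $w^+$ minorises, $w^-$ decays and $N^-\ge\alpha N$ is left uncovered. It is harmless --- there $\|w(T)\|_{L^1}\le N^++q_0N^-\le\bigl(1-(1-q_0)\alpha\bigr)N$, which is~\eqref{L1-squeezing} --- but it must be stated. (Likewise, whenever $w^+$ decays you already get squeezing from $N^+\ge N/2$ alone, without any condition on $N^-$.)
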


In view of Corollary~\ref{c-uniform-bounds}, inequality~\eqref{a-bound} holds for the vector function $a(t,x)$ defined by~\eqref{function-a}. Applying Proposition~\ref{p-dichotomy} with $T=\frac12$ to the function $w=v-\hat u$, we distinguish between two cases. If~\eqref{L1-squeezing} holds, then we choose $\xi\equiv 0$, so that the solution~$u$ of the controlled problem~\eqref{controlled-PDE}, \eqref{IC} coincides with~$v$ and, hence, in view of the contraction of the $L^1$-norm, satisfies the inequality
\begin{equation*} \label{squeezing-u-hatu}
	\|u(1)-\hat u(1)\|_{L^1}\le \|u(\tfrac12)-\hat u(\tfrac12)\|_{L^1}
	\le q\,\|u_0-\hat u(0)\|_{L^1}. 
\end{equation*}
In the opposite case, $w$ satisfies~\eqref{lower-bound} with $T=\frac12$. Let us fix a parameter $\tau\in(0,\tfrac12)$, set $T_\tau=\tfrac12+\tau$, and define the function $\eta(t,x)=\xi(t)\varphi(x)$, where~$\xi$ is zero on the set $[0,\tfrac12)\cup[T_\tau,1)$ and is equal to a contant~$\bar\xi$ on $[\tfrac12,T_\tau)$. The difference $z=u-\hat u$ is a solution of the problem 
\begin{align} \label{z-problem}
		\p_t z-\nu\Delta z+\diver \bigl(A(\hat u+z)-A(\hat u)\bigr)=\bar\xi\,\varphi(x), \quad z\bigr|_{\p D}=0, \quad z(\tfrac12)=\bar z
\end{align}
on the interval $[\frac12,T_\tau]$, where $\bar z=w(\tfrac12)$. Note that, in view of the $L^1$-contraction for~\eqref{linear-uncontrolled} and inequality~\eqref{lower-bound} with $T=\frac12$, we have 
\begin{equation} \label{L1-z}
	\|\bar z\|_{L^1}\le d, \quad \inf_{x\in\Pi}|\bar z(x)|\ge \delta d,
\end{equation}
where $d=\|u_0-\hat u(0)\|_{L^1}$. A key observation is that the solution of~\eqref{z-problem} is close to that of the ODE
\begin{equation} \label{Z-ODE}
	\p_t Z=\bar\xi\,\varphi(x),  \quad Z(\tfrac12)=\bar z.
\end{equation}
Namely, we have the inequality 
\begin{equation} \label{difference-z-Z}
	\|z(T_\tau)-Z(T_\tau)\|_{L^1}\le C_1\tau,
\end{equation}
where $C_1>0$ does not depend on~$u_0$, $\hat u(0)$, and~$\tau$; see Lemma~\ref{l-comparison} below. In view of the second inequality in~\eqref{L1-z} and the continuity of~$\bar z$, we can assume, for instance, that $\bar z(x)\ge \delta d$ for $x\in\Pi$. In this case, we set $\bar \xi=-(\tau M)^{-1}\theta\delta d$, where $M=\|\varphi\|_{L^\infty}$ and $\theta\in(0,1)$ is a small parameter to be fixed later. It follows from \eqref{L1-z}--\eqref{difference-z-Z} that 
\begin{align}
	\|u(T_\tau)-\hat u(T_\tau)\|_{L^1}=\|z(T_\tau)\|_{L^1}
	&\le \|z(T_\tau)-Z(T_\tau)\|_{L^1}+\|Z(T_\tau)\|_{L^1}\notag\\
	&\le C_1\tau + \|\bar z-M^{-1}\theta\delta d\,\varphi\|_{L^1}\notag\\
	&\le C_1\tau+d\bigl(1-M^{-1}\theta\delta \|\varphi\|_{L^1}\bigr). 
\end{align}
Choosing $\tau=\e d$ with a small $\e>0$ and using again the $L^1$-contraction for~\eqref{linear-uncontrolled}, we obtain
\begin{equation} \label{squeezing-final}
	\|u(1)-\hat u(1)\|_{L^1}\le \|u(T_\tau)-\hat u(T_\tau)\|_{L^1}
	\le q_1\|u_0-\hat u(0)\|_{L^1},
\end{equation} 
where $q_1=1+C_1\e-M^{-1}\theta\delta \|\varphi\|_{L^1}$. Hence, if $\e=(2C_1M)^{-1}\theta\delta\|\varphi\|_{L^1}$, then $q_1=1-\frac12M^{-1}\theta\delta \|\varphi\|_{L^1}<1$. Iteration of this procedure will give the required exponential decay of the $L^1$-norm of the difference $u(t)-\hat u(t)$. 

\subsection{Proof of Proposition~\ref{p-dichotomy}}
\label{s-auxiliary-result}
The proof builds on the argument used in~\cite[Section~3.2]{shirikyan-jep2017} for the 1D Burgers equation. Since the result we establish here is stronger, we provide all the details.  

\smallskip
{\it Step~1}. We first show that for any $q\in(0,1)$ there is $\e=\e(\Pi,T,q,\rho)>0$ such that if $w\in L^\infty(Q_T)$ is a non-negative solution of problem~\eqref{linear-uncontrolled}, then either~\eqref{L1-squeezing} holds, or
\begin{equation} \label{inf-bound}
\inf_{x\in \Pi}w(T,x)\ge \e\,\|w(0)\|_{L^1}. 
\end{equation}
Indeed, there is no loss of generality in assuming that $\|w(0)\|_{L^1}=1$. Denoting by $G(t,x,y)$ Green's function for~\eqref{linear-uncontrolled} and writing~$w$ in the form
\begin{equation} \label{w-green}
w(t,x)=\int_DG(t,x,y)w(0,y)\,\dd y,	
\end{equation}
we note that, in view of inequality~\eqref{green-bound} with $j=|\alpha|=0$, 
\begin{equation} \label{upper-bound-w}
\sup_{(t,x)\in[\frac{T}{2},T]\times D}w(t,x)\le M, 
\end{equation}
where we set 
$$
M=\sup_{t\in[\frac{T}{2},T]}\sup_{x,y\in D}G(t,x,y). 
$$
Let $K\subset D$ be a connected compact set  such that  $\Pi\subset K$ and 
$\mathscr{L}(D\setminus K)\le \frac{q}{2M}$, where $\mathscr{L}$ stands for the Lebesgue measure on~$\R^d$. By Harnack's inequality (see Proposition~\ref{p-Harnack}), we can find $C=C(K,T)>0$ such that
\begin{equation} \label{2.33}
\sup_{x\in K}w\bigl(\tfrac{T}{2},x\bigr)\le C\inf_{x\in K}w(T,x). 
\end{equation}
We now set $\e=q(2C\mathscr{L}(K))^{-1}$ and suppose that
\begin{equation*} 
\inf_{x\in \Pi}w(T,x)\le \e;
\end{equation*}
in the opposite case~\eqref{inf-bound} holds. Since $\Pi\subset K$, we have
\begin{equation} \label{2.34} 
\inf_{x\in K}w(T,x)\le \e. 
\end{equation}
Combining~\eqref{upper-bound-w}--\eqref{2.34} and the contraction of the $L^1$-norm for solutions of~\eqref{linear-uncontrolled}, we derive
\begin{align*}
\|w(T)\|_{L^1}
&\le \|w(T/2)\|_{L^1}\le \int_{D\setminus K}w(T/2)\,\dd x+\int_K w(T/2)\,\dd x\\
&\le  M \mathscr{L}(D\setminus K)+C\e\mathscr{L}(K)\le q. 
\end{align*}
This coincides with~\eqref{L1-squeezing} in the case under study. 

\medskip
{\it Step~2\/}. We now consider the case of an arbitrary solution, assuming again that $\|w(0)\|_{L^1}=1$. Let us denote by~$w_0^+$ and~$w_0^-$ the positive and negative parts of~$w(0)$ and write~$w^\pm$ for the solution of~\eqref{linear-uncontrolled} issued from~$w_0^\pm$. We set $r=\|w_0^+\|_{L^1}$ and assume without loss of generality that $r\ge\frac12\|w_0\|_{L^1}=\frac12$. Suppose first that $\|w^+(T)\|_{L^1}\le r/2$. Then, in view of the contraction of the $L^1$-norm of solutions for~\eqref{linear-uncontrolled}, we have 
\begin{align*}
\|w(T)\|_{L^1}
\le \|w^+(T)\|_{L^1}+\|w^-(T)\|_{L^1}
\le \frac{r}{2}+\|w^-(0)\|_{L^1}
=1-\frac{r}{2}\le\frac{3}{4}.
\end{align*}
This coincides with the first inequality in~\eqref{L1-squeezing} with $q=\frac34$ and $\|w(0)\|_{L^1}=1$. 

We now assume that $\|w^+(T)\|_{L^1}>\frac{r}{2}=\frac12\|w_0^+\|_{L^1}$, so that inequality~\eqref{L1-squeezing} with $q=\frac12$ does not hold for the positive solution~$w^+$. Then we can find $\delta'>0$ such that 
\begin{equation} \label{lower-bound-w+}
	\inf_{x\in\Pi}w^+(T,x)\ge \delta'\,\|w_0^+\|_{L^1}\ge \frac{\delta'}{2}. 
\end{equation}
If, in addition, 
$$
\sup_{x\in\Pi}w^-(T,x)\le \frac{\delta'}{4},
$$
then inequality~\eqref{lower-bound} holds with $\delta=\frac{\delta'}{4}$. Thus, we can assume that 
\begin{equation}  \label{w-lower-bound}
	w^-(T,x_0)\ge\frac{\delta'}{4}
\end{equation}
for some point $x_0\in\Pi$. Using representation~\eqref{w-green} with $w=w^-$ and inequality~\eqref{green-bound} with $j=0$ and~$|\alpha|=1$, we derive
$$
\sup_{x\in D}|\nabla w^-(T,x)|\le M_1
$$
where $M_1$ does not depend on~$w$. It follows that 
\begin{align*}
w^-(T,x)&=w^-(T,x_0)+w^-(T,x)-w^-(T,x_0)
\ge \frac{\delta'}{4}-M_1|x-x_0|
\ge \frac{\delta'}{8},
\end{align*}
provided that $|x-x_0|\le r_0:=\delta'(8M_1)^{-1}$. Denoting by~$B$ the ball of radius~$r_0$ centred at~$x_0$ and assuming without loss of generality\footnote{We can repeat the above argument for a small subset of~$\Pi$ and find a point $x_0$ that satisfies~\eqref{w-lower-bound} and lies at some positive distance from the boundary of~$\Pi$.} that $B\subset\Pi$, we derive
\begin{equation} \label{lower-bound-w-}
	\inf_{x\in B}w^-(T,x)\ge\kappa:=\frac{\delta'}{8}. 
\end{equation}
In this case, denoting by~${\mathbf1}_B$ the indicator function of~$B$ and using inequalities~\eqref{lower-bound-w+} and~\eqref{lower-bound-w-}, we can write
\begin{align*}
	\|w(T)\|_{L^1}
	&=\int_D|w^+(T,x)-w^-(T,x)|\,\dd x\\
	&=\int_D\bigl|\bigl(w^+(T,x)-\kappa {\mathbf1}_B(x)\bigr)-\bigl(w^-(T,x)-\kappa{\mathbf1}_B(x)\bigr)\bigr|\,\dd x\\
	&\le\int_D\bigl(w^+(T,x)-\kappa {\mathbf1}_B(x)\bigr)\,\dd x
	+\int_D\bigl(w^+(T,x)-\kappa {\mathbf1}_B(x)\bigr)\,\dd x \\
	&\le \|w^+(T)\|_{L^1}+\|w^-(T)\|_{L^1}-2\kappa\,\LLL(B)\\
	&\le 1-2\kappa\,\LLL(B)=:q, 
\end{align*}
where the last inequality follows from the contraction of the $L^1$-norm of solutions for~\eqref{linear-uncontrolled}. This completes the proof of the proposition.

\subsection{Proof of Theorem~\ref{t-control}}
\label{s-control-proof}
{\it Step~1: Reduction\/}. 
Choosing $\eta\equiv0$ on the time interval $[0,1)$, we see that, in view of Corollary~\ref{c-uniform-bounds} and Proposition~\ref{p-L1-contraction}, 
\begin{gather*}
	\|u(1)\|_{C^{2+\gamma}}+\|\hat u(1)\|_{C^{2+\gamma}}\le C_1,\\
	\|u(1)-\hat u(1)\|_{L^1}\le \|u_0-\hat u(0)\|_{L^1}, 
\end{gather*}
where $C_1\ge1$ is a number not depending on~$u_0$ and~$\hat u$. Therefore, we can assume from the very beginning that the $C^{2+\gamma}$ norms of~$u_0$ and~$\hat u(0)$ are bounded by some universal constant and prove inequality~\eqref{expo-stab} for $t\ge0$. 

Suppose we found a control $\eta(t,x)=\xi(t)\varphi(x)$, with a piece-wise constant function~$\xi$ satisfying~\eqref{control-estimate} and~\eqref{control-structure}, such that, for any $t\ge0$, 
\begin{align}
	\|u(t)-\hat u(t)\|_{C^{2+\gamma}}&\le C_2,\label{C2gamma-bound}\\
	\|u(t)-\hat u(t)\|_{L^1}&\le C_2e^{-at}\|u_0-\hat u(0)\|_{L^1},\label{L1-expodecay}
\end{align}
 where $C_2$ and~$a$ are positive numbers not depending on~$u_0$ and~$\hat u$. In this case, by the interpolation inequality~\eqref{GNS-inequality}, for any $\sigma\in(0,\gamma)$ we can write
$$
\|u(t)-\hat u(t)\|_{C^{2+\sigma}}\le C\,C_2 \bigl(e^{-at}\|u_0-\hat u(0)\|_{L^1}\bigr)^\theta, 
$$
where $\theta=\frac{\gamma-\sigma}{d+\gamma+2}$. We thus obtain inequality~\eqref{expo-stab} with $\alpha=\theta a$ and $\beta=\theta$.

\medskip
{\it Step~2: Uniform bound of the $C^{2+\gamma}$-norm\/}. We claim that if~$\xi$ has the form described in the theorem, then inequality~\eqref{C2gamma-bound} holds with a sufficiently large constant~$C_2$ depending only on~$|\!|\!|h|\!|\!|_\gamma$. Indeed, a straightforward consequence of Corollary~\ref{c-uniform-bounds} is that $\|\hat u(t)\|_{C^{2+\gamma}}\le C_3$ for all $t\ge0$ and 
\begin{equation} \label{C2gamma-bound-integer}
\|u(k)\|_{C^{2+\gamma}}\le C_3 
\end{equation}
for all integers $k\ge0$, where we write~$C_i$ for unessential positive numbers that may depend only on~$|\!|\!|h|\!|\!|_\gamma$. Thus, the required bound~\eqref{C2gamma-bound} will be established if we prove that, for any $k\ge0$, inequality~\eqref{C2gamma-bound-integer} implies that 
\begin{equation} \label{C2gamma-bound-interval}
\|u(t)\|_{C^{2+\gamma}}\le C_4 \quad\mbox{for $t\in [k,k+1]$}. 
\end{equation}
 In view of translation invariance, there is no loss of generality in assuming that $k=0$. We thus consider the solution~$u(t,x)$ of problem~\eqref{controlled-PDE}, \eqref{BC}, \eqref{IC} on the time interval $J=[0,1]$, with an initial condition~$u_0\in C^{2+\gamma}(D)$ and a control function of the form
 \begin{equation} \label{eta}
 \eta(t,x)=\xi(t)\varphi(x), \quad \xi(t)=\left\{
 		\begin{array}{cl}
			0  & \mbox{for $t\in\bigl[0,\tfrac12\bigr)\cup [T_\tau,1]$},\\[3pt]
			\bar\xi & \mbox{for $t\in \bigl[\tfrac12,T_\tau\bigr)$},
		\end{array}
 \right.  	
 \end{equation}
 where $T_\tau=\frac12+\tau$, $\tau\in(0,\frac14)$, and $\tau|\bar\xi|\le C_5$. The function~$u$ is constructed by applying Remark~\ref{r-existence} in the domain $Q=J\times D$. This  results in a continuous curve $u:J\to C^{2+\gamma}(D)$ whose restrictions to the  cylinders 
$$
\Omega_1:=(0,\tfrac12)\times D, \quad \Omega_2:=(\tfrac12,T_\tau)\times D, \quad \Omega_3=(T_\tau,1)\times D
$$
belong to the spaces $\XX^\gamma(\Omega_i)$, $i=1,2,3$. By the maximum principle applied consecutively to~$u|_{\Omega_i}$ (see Proposition~\ref{p-maximum}), the $L^\infty$ norm of~$u$ is bounded by a universal constant (i.e., a number not depending on~$\tau$ and~$\bar\xi$). Setting $g=u-\zeta$ with $\zeta(t,x)=\int_0^t\eta(s,x)\,\dd s$, we see that $g(t)$ is a continuous $C^{2+\gamma}(D)$-valued function satisfying the equation
\begin{equation*}
	\p_tg-\nu\Delta g+\sum_{j=1}^d \p_j A_j(g+\zeta(t,x))=h+\nu\Delta\zeta.
\end{equation*}
Rewriting this as the inhomogeneous heat equation 
\begin{equation*} \label{heat}
\p_tg-\nu\Delta g=f(t,x):=h+\nu\Delta\zeta-\sum_{j=1}^d \p_j A_j(g+\zeta(t,x))	
\end{equation*}
and applying Theorem~10.1 in~\cite[Chapter~III]{LSU1968}, we conclude that, for some $\gamma'\in(0,\gamma]$ the norm $|\!|\!|g|\!|\!|_{\gamma'}$ is bounded by a universal constant. A simple calculation shows that if 
\begin{equation} \label{tau-bound}
\tau^{1-\gamma}|\bar\xi|\le 1,
\end{equation}
then for any multi-index $\alpha\in\Z_+^d$ there is a universal constant~$C_\alpha$ such that 
\begin{equation} \label{bound-zeta}
	|\!|\!|\p_x^\alpha \zeta|\!|\!|_\gamma\le C_\alpha. 
\end{equation}
Using now Theorem~5.2 in~\cite[Chapter~IV]{LSU1968} with $l=\gamma'$, we obtain a universal bound for $\|g\|_{\XX^{\gamma'}(Q)}$. Applying again the same result with~$l=\gamma$, we derive a universal bound for $\|g\|_{\XX^{\gamma}(Q)}$. The required bound~\eqref{C2gamma-bound-interval} follows from the representation $u=g+\zeta$ and inequality~\eqref{bound-zeta}. 

\medskip
{\it Step~3: Decay of the $L^1$-norm\/}. We now prove~\eqref{L1-expodecay}. To this end, suppose we have established the following estimate for all integers $k\ge1$:
\begin{equation} \label{L1-expodecay-integer}
	\|u(k)-\hat u(k)\|_{L^1}\le q_1\|u(k-1)-\hat u(k-1)\|_{L^1}. 
\end{equation} 
In this case, inequality~\eqref{L1-expodecay} will follow immediately if we prove that 
\begin{equation} \label{L1-bound}
	\|u(t)-\hat u(t)\|_{L^1}\le C\,\|u(k)-\hat u(k)\|_{L^1}
	\quad\mbox{for $t\in[k,k+1]$}.
\end{equation}
By homogeneity, it suffices to consider the case $k=0$. The difference $w=u-\hat u$ is  the solution of the problem
\begin{equation} \label{difference-w}
	\p_tw-\nu\Delta w+\diver_x(a(t,x)w)=\eta(t,x),\quad w\big|_{\p D}=0, \quad w(0)=w_0,
\end{equation}
where $w_0=u(0)-\hat u(0)$, $\eta$ is defined by~\eqref{eta}, and the vector function~$a$ is given by~\eqref{function-a}. Let $\psi:\R\to\R$ be a smooth non-decreasing function vanishing at zero and such that $\psi(s)=\pm1$ for $\pm s\ge1$, and let $\psi_\e(s)=\psi(s/\e)$. We multiply the first relation in~\eqref{difference-w} by~$\psi_\e(w)$ and integrate over~$D$. Denoting by $\Psi_\e$ the primitive of~$\psi_\e$ vanishing at zero, after some simple transformations, we derive
$$
\p_t\int_D\Psi_\e(w)\dd x+\frac{\nu}{\e}\int_D|\nabla w|^2\psi'(\tfrac{w}{\e})\dd x=
\frac{1}{\e}\int_Dw\langle a,\nabla w\rangle \psi'(\tfrac{w}{\e})\dd x+\int_D\eta \psi(\tfrac{w}{\e})\dd x. 
$$
Using the boundedness of~$\psi$ and the Cauchy--Schwarz inequality,  we obtain
\begin{equation} \label{psi-theta}
	\p_t\int_D\Psi_\e(w)\dd x
	\le C_\nu \int_D\tfrac{|w|^2}{\e}\,\psi'(\tfrac{w}{\e})\,\dd x 
	+\xi(t)\|\varphi\|_{L^1}.
\end{equation}
Since the integral on the right-hand side is bounded by $\|w\|_{L^1}$, integrating~\eqref{psi-theta} in time and passing to the limit as $\e\to0^+$, we obtain
$$
\|w(t)\|_{L^1}\le C_\nu\int_0^t\|w(s)\|_{L^1}\dd s+d+\tau|\bar\xi|\,\|\varphi\|_{L^1},
$$
where $d=\|u(0)-\hat u(0)\|_{L^1}$. Assuming that 
\begin{equation} \label{tau-xi-bound}
	\tau|\bar\xi|\le K_1d, 
\end{equation}
where $K_1>0$ is fixed,  and applying Gronwall's inequality, we arrive at~\eqref{L1-bound} with $k=0$. 

It remains to prove~\eqref{L1-expodecay-integer}. There is no loss of generality in assuming that $k=0$, so that we prove~\eqref{squeezing-final}. We shall need the following auxiliary result. 

\begin{lemma} \label{l-comparison}
Let $\tau,R>0$ and $\bar\xi\in\R$ be some numbers such that $\tau|\bar\xi|\le1$, let $\eta(t,x)=\bar\xi\varphi(x)$ for $0\le t\le \tau$, and let $u,\hat u\in C^{1,2}(\overline Q_\tau)$ be solutions of Eqs.~\eqref{controlled-PDE}  and~\eqref{PDE} respectively such that $\|u\|_{C^{1,2}}+\|\hat u\|_{C^{1,2}}\le R$. Then there is $C>0$ depending only on~$R$ such that the difference $z=u-\hat u$ satisfies the inequality 
	\begin{equation} \label{comparison-estimate}
		\bigl\|z(t)-(z(0)+t\,\bar\xi\varphi)\bigr\|_{L^\infty(D)}\le C\,t\quad
		\mbox{for $0\le t\le \tau$}. 
	\end{equation}
\end{lemma}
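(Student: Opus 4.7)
The plan is to reduce the claim to a pointwise-in-time bound on $\partial_t r$, where $r(t,x):=z(t,x)-z(0,x)-t\bar\xi\varphi(x)$. Since $r(0)=0$, it suffices to show $\|\partial_t r(t)\|_{L^\infty}\le C(R)$ uniformly in $t\in[0,\tau]$ and then integrate.

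First I would subtract Eq.~\eqref{PDE} from Eq.~\eqref{controlled-PDE} with $\eta=\bar\xi\varphi$ to obtain the equation satisfied by $z=u-\hat u$ on $Q_\tau$, namely
\begin{equation*}
\partial_t z-\nu\Delta z+\diver_x\bigl(A(u)-A(\hat u)\bigr)=\bar\xi\,\varphi(x).
\end{equation*}
Differentiating the definition of $r$ in time gives $\partial_t r=\partial_t z-\bar\xi\varphi$, and substituting from the identity above yields
\begin{equation*}
\partial_t r=\nu\Delta z-\diver_x\bigl(A(u)-A(\hat u)\bigr).
\end{equation*}
Both terms on the right-hand side are controlled pointwise by the hypothesis $\|u\|_{C^{1,2}}+\|\hat u\|_{C^{1,2}}\le R$: the first is bounded by $2\nu R$ directly, while expanding $\diver_x\bigl(A(u)-A(\hat u)\bigr)=\sum_j\bigl(A_j'(u)\partial_j u-A_j'(\hat u)\partial_j\hat u\bigr)$ and using $A\in C^2$ together with the $L^\infty$ bound on $u,\hat u$ (contained in the $C^{1,2}$ bound) gives a bound by a constant depending only on $R$ and on $A$ restricted to the interval $[-R,R]$. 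Combining these, we get $\|\partial_t r(t)\|_{L^\infty(D)}\le C(R)$ for every $t\in[0,\tau]$.

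Since $r(0,x)=0$, integrating this uniform bound in time on $[0,t]$ yields $\|r(t)\|_{L^\infty(D)}\le C(R)\,t$, which is exactly~\eqref{comparison-estimate}. The proof is essentially mechanical: there is no real obstacle, as the argument uses only the regularity encoded in the $C^{1,2}$ norm and does not require any use of the sign condition or of the smallness assumption $\tau|\bar\xi|\le 1$ (this assumption is only needed upstream, to guarantee that solutions with such a control remain in a $C^{1,2}$ ball of radius $R$, which is what Step~2 of the proof of Theorem~\ref{t-control} established via~\eqref{tau-bound}--\eqref{bound-zeta}). Thus the lemma follows from one line of subtraction and one application of the fundamental theorem of calculus.
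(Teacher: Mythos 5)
Your proof is correct and is essentially identical to the paper's: your function $r$ is the paper's $z_1$, and the paper likewise writes $\p_t z_1=f:=\nu\Delta z-\diver\bigl(A(u)-A(\hat u)\bigr)$, bounds $\|f\|_{L^\infty}$ by a constant depending only on~$R$ (using $A\in C^2$ and the $C^{1,2}$ bound), and integrates in time from $z_1(0)=0$. Your side remark that $\tau|\bar\xi|\le1$ is not used in the proof itself is also consistent with the paper's argument.
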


Taking this lemma for granted, let us complete the proof of~\eqref{L1-expodecay-integer}. As was established at the end of Section~\ref{s-control-result}, the required inequality~\eqref{squeezing-final} will be proved if we show that~\eqref{difference-z-Z} holds, and the restrictions imposed on~$\tau$ and~$\bar\xi$ are compatible. The validity of~\eqref{difference-z-Z} follows from inequality~\eqref{comparison-estimate} rewritten on the interval~$[\frac12,\frac12+\tau]$. To see the compatibility of restrictions, let us recall that we seek $\theta>0$ such that the numbers 
$$
\tau=(2C_1M)^{-1}\theta\delta d\,\|\varphi\|_{L^1}, \quad |\bar\xi|=(\tau M)^{-1}\theta\delta d=2C_1\|\varphi\|_{L^1}^{-1}
$$
satisfy inequalities~\eqref{tau-bound} and~\eqref{tau-xi-bound}. Since~$d$ is bounded above by a universal number and $\gamma\in(0,1)$, it is easy to see that both~\eqref{tau-bound} and~\eqref{tau-xi-bound} hold for $\theta\ll1$, and relations~\eqref{control-estimate} are valid for $K=2C_1\|\varphi\|_{L^1}^{-1}$ and $\varkappa=(2C_1M)^{-1}\theta\delta\,\|\varphi\|_{L^1}$.  We have thus completed the proof of Theorem~\ref{t-control}. 

\begin{proof}[Proof of Lemma~\ref{l-comparison}]
	The difference $z=u-\hat u$ belongs to the space~$C^{1,2}(\overline Q_\tau)$ and satisfies Eqs.~\eqref{z-problem}, in which the initial time $t=\frac12$ is replaced by~$t=0$. Setting 
	$$
	f(t,x)=\nu\Delta z-\diver\bigl(A(u)-A(\hat u)\bigr),
	$$
we see that the function $z_1(t)=z(t)-z(0)-t\bar\xi\varphi$ is a solution of the problem
	\begin{equation} \label{z1-equation}
	\p_tz_1=f(t,x), \quad z_1(0)=0.		
	\end{equation}
	The hypotheses imposed on~$u$ and~$\hat u$ imply that the $L^\infty$ norm of~$f$ is bounded by a constant depending only on~$R$. Integrating~\eqref{z1-equation} in time, we arrive at the required estimate~\eqref{comparison-estimate}. 
\end{proof}

\section{Appendix}
\label{s-appendix}

\subsection{Green's function for linear parabolic equations}
\label{ss-green}
Let $D\subset\R^d$ be a bounded domain with $C^3$-smooth boundary, let $J_T=[0,T]$ be an interval, and let~$G(t,x,y)$ be the Green function for the second-order parabolic PDE
\begin{equation} \label{parabolic-PDE}
	\p_t w-\nu\Delta_x w+\langle b(t,x),\nabla\rangle w+b_0(t,x)w=0, \quad x\in D,
\end{equation}
where the functions $b=(b_1,\dots,b_d)$ and~$b_0$ belong to the H\"older class $C^{\frac{\gamma}{2},\gamma}(Q_T)$ for some $\gamma\in(0,1)$. In other words, $G$ is the solution of~\eqref{parabolic-PDE} satisfying the initial and boundary conditons
$$
G\bigr|_{t=0}=\delta_y, \quad
G\bigr|_{x\in \p D}=0,
$$
where $\delta_y$ stands for the Dirac mass at~$y$. A proof of the existence of~$G$, as well as the following inequalities, can be found in~\cite{LSU1968} (see Theorems~16.2 and~16.3 in Chapter~IV):
\begin{align}
|\p_t^j\p_x^\alpha G(t,x,y)|&\le 
C\,t ^{-\frac{d+2j+|\alpha|}{2}}e^{-c\frac{|x-y|^2}{t}},
\label{green-bound}\\
|\p_t^j\p_x^\alpha G(t,x_1,y)-\p_t^j\p_x^\alpha G(t,x_2,y)|&\le 
C\,|x_1-x_2|^\gamma t ^{-\frac{d+2+|\alpha|}{2}}e^{-c\frac{d(x_1,x_2,y)^2}{t}},
\label{green-holder}
\end{align}
where $0<t\le T$, $x,x_1,x_2,y\in D$, $d(x_1,x_2,y)=|x_1-y|\wedge|x_2-y|$, $j\ge0$ and $\alpha\in \Z_+^d$ are such that $2j+|\alpha|\le 2$, and~the constants~$C$ and~$c$ on the right-hand side depend only $\nu$, $T$, $D$, $\gamma$, and the norms of the functions~$b$ and~$b_0$ in the space~$C^{\frac{\gamma}{2},\gamma}(Q_T)$. 

\subsection{Harnack's inequality and $L^1$-contraction}
We first recall a fundamental estimate for positive solutions of~\eqref{parabolic-PDE}. The following proposition is a simple particular case of more general results established in~\cite{moser-1964,landis1998,KS-1980}.

\begin{proposition} \label{p-Harnack}
Let $T>0$ and let $b_0,b\in L^\infty(Q_T)$. Then, for any compact set $K\subset D$ and any numbers $0<s<t\le T$, there is $C>0$ depending on $\|b_0\|_{L^\infty(Q_t)}+\|b\|_{L^\infty(Q_t)}$ such that, if $w\in L_{\mathrm{loc}}^\infty(\R_+\times D)$ is a non-negative solution of~\eqref{parabolic-PDE} satisfying the Dirichlet boundary condition, then
\begin{equation} \label{harnack}
\sup_{x\in K}w(s,x)\le C\inf_{x\in K}w(t,x).
\end{equation}
\end{proposition}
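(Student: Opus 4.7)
The plan is to reduce the assertion to the classical interior parabolic Harnack inequality of Moser and to cover the compact set~$K$ by a chain of balls strictly inside~$D$. Since $K$~stays at positive distance from~$\p D$, the Dirichlet boundary condition plays no role in the estimate; only interior regularity is used. The uniform bounds on~$b$ and~$b_0$ guarantee that~\eqref{parabolic-PDE} is uniformly parabolic with bounded lower-order coefficients, so Moser's Harnack inequality and its extensions by Landis and Krylov--Safonov apply directly.

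First I would cover~$K$ by finitely many closed balls $B_r(x_1),\ldots,B_r(x_N)$ with $r>0$ small enough that each $\overline{B_{2r}(x_j)}$ is contained in~$D$. The classical local parabolic Harnack inequality then gives, for each~$j$ and any pair of sub-cylinders $Q_j^-=(t_1,t_2)\times B_r(x_j)$ and $Q_j^+=(t_3,t_4)\times B_r(x_j)$ with $0<t_1<t_2<t_3<t_4\le T$, a bound of the form
\begin{equation*}
\sup_{Q_j^-} w\le C_j\inf_{Q_j^+} w,
\end{equation*}
where $C_j$ depends only on~$\nu$, $r$, the lengths $t_2-t_1$ and $t_3-t_2$, and the $L^\infty$-norms of~$b$ and~$b_0$ on~$Q_T$. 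Next I would arrange the covering so that consecutive balls overlap, subdivide the interval $[s,t]$ into a corresponding number of equal sub-intervals, and iterate the inequality along a chain that connects any given point of~$K$ at time~$s$ to any given point of~$K$ at time~$t$. Taking the supremum over the starting point and the infimum over the endpoint then yields~\eqref{harnack} with a constant depending only on the stated quantities.

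The main obstacle is the local Moser Harnack inequality itself, whose proof requires the full iteration scheme: a local $L^\infty$ bound obtained from an $L^p$ norm by De Giorgi--Moser iteration, a logarithmic estimate on~$\log w$ that produces a weak Harnack inequality via John--Nirenberg, and the combination of the two to control $\sup w$ by $\inf w$ across a time gap. Rather than reprove this, I would invoke the results of~\cite{moser-1964,landis1998,KS-1980}, which apply verbatim to~\eqref{parabolic-PDE} under the stated assumptions, the only non-trivial check being that the constants in those results depend on the coefficients only through their $L^\infty$ norms and not through any modulus of continuity. The chaining step is then purely elementary, as $K$ is compact and the constants~$C_j$ above are uniform in the choice of centre~$x_j$ and in the fixed time-shifts used at each link of the chain.
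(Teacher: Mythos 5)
Your proposal is correct and takes essentially the same route as the paper, which likewise treats the statement as a particular case of the interior Harnack inequalities of Moser, Landis and Krylov--Safonov, with the chaining over a finite covering of the compact set $K\subset D$ left implicit. The one detail the paper records that you pass over is that Theorem~1.1 of \cite{KS-1980} (the version allowing lower-order terms) assumes a sign condition on the zeroth-order coefficient, which is removed by the substitution $w=e^{\lambda t}v$; this is a minor adjustment and does not affect the validity of your argument.
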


Let us note that~\cite{moser-1964,landis1998} deal with parabolic equations with no lower-order terms, while the paper~\cite{KS-1980} (see Theorem~1.1) considers the general case, assuming that the coefficient in front of the zeroth-order term satisfies a sign condition. That condition can easily be removed by the change of unknown function $u=e^{\lambda t}v$, with a suitable choice of~$\lambda\in\R$. Moreover, it follows from~\eqref{green-bound} that any solution $w\in L_{\mathrm{loc}}^\infty(\R_+\times D)$ for Eq.~\eqref{parabolic-PDE} with the Dirichlet boundary condition is a continuous function of its arguments for~$t>0$, so that the left- and right-hand sides of~\eqref{harnack} are well defined. 

We now turn to the particular case of~\eqref{parabolic-PDE}. Namely, consider the linear problem~\eqref{linear-uncontrolled}, in which $a\in C(\overline Q_T)$ is a given function. The following result is a simple consequence of the maximum principle and a duality argument, and its proof can be found in~\cite[Lemma~3.2.2]{hormander1997}.

\begin{proposition} \label{p-L1-contraction}
	Let $w\in C^{1,2}(\overline Q_T)$ be a solution of problem~\eqref{linear-uncontrolled} in which $a\in C(\overline Q_T)$. Then $\|w(T)\|_{L^1}\le \|w(0)\|_{L^1}$.
\end{proposition}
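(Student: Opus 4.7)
The plan is a duality argument against the formal adjoint of~\eqref{linear-uncontrolled}, in the same spirit as the one already used at the end of the proof of Theorem~\ref{t-comparison-principle}. The underlying observation is that
\begin{equation*}
  \|w(T)\|_{L^1}=\sup\bigl\{(w(T),\varphi):\varphi\in C_0^\infty(D),\ \|\varphi\|_{L^\infty}\le 1\bigr\},
\end{equation*}
and each such pairing can be transported backwards to time~$0$ along a flow that preserves the sup-norm.

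Concretely, I would fix an arbitrary $\varphi\in C_0^\infty(D)$ with $\|\varphi\|_{L^\infty}\le 1$ and solve the terminal-value problem
\begin{equation*}
  \p_t\psi+\nu\Delta\psi+\langle a(t,x),\nabla\psi\rangle=0,\quad \psi\bigr|_{\p D}=0,\quad \psi(T,\cdot)=\varphi
\end{equation*}
backwards on~$[0,T]$. By the maximum principle (Section~3.2 in~\cite{landis1998}), $\|\psi(t)\|_{L^\infty}\le\|\varphi\|_{L^\infty}\le 1$ for every $t\in[0,T]$. I would then compute $\tfrac{\dd}{\dd t}(w(t),\psi(t))$: using~\eqref{linear-uncontrolled} together with the adjoint equation and integrating by parts in~$x$, the second-order terms cancel by Green's identity, and the drift terms cancel after one integration by parts on $\diver(aw)\psi$; all boundary integrals vanish because both $w$ and $\psi$ satisfy the Dirichlet condition. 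Hence $(w(t),\psi(t))$ is constant on $[0,T]$, and therefore
\begin{equation*}
  \bigl|(w(T),\varphi)\bigr|=\bigl|(w(0),\psi(0))\bigr|\le \|w(0)\|_{L^1}\|\psi(0)\|_{L^\infty}\le \|w(0)\|_{L^1}.
\end{equation*}
Taking the supremum over admissible $\varphi$ yields the required contraction.

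The main technical obstacle is that the drift $a$ is merely continuous, so the adjoint problem does not immediately fit into the Hölder-coefficient framework invoked earlier in the paper; in particular, the smoothness of $\psi$ up to $\p D$ needed to justify the maximum principle and the integration by parts above is not automatic. The cleanest workaround is an approximation argument: mollify $a$ to $a_n\in C^\infty$ with $a_n\to a$ uniformly on~$\overline Q_T$, establish the $L^1$-contraction first for the solutions~$w_n$ of the equation with drift~$a_n$ and initial datum~$w(0)$ (for which the duality argument above is entirely classical, since the corresponding~$\psi_n$ are smooth), and then pass to the limit using continuous dependence~$w_n(T)\to w(T)$ in~$L^1$. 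Everything else in the above scheme is routine once this regularity issue has been dealt with.
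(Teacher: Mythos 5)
Your argument is correct and is precisely the ``maximum principle plus duality'' proof that the paper has in mind: it does not write out a proof of Proposition~\ref{p-L1-contraction} but describes it in exactly those terms and refers to \cite[Lemma~3.2.2]{hormander1997}. Your computation that $(w(t),\psi(t))$ is constant (all boundary terms vanish since both $w$ and $\psi$ satisfy the Dirichlet condition) is right, and your mollification of the merely continuous drift is a reasonable way to handle the regularity of the adjoint solution.
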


\subsection{Maximum principle for the nonlinear problem}

\begin{proposition} \label{p-maximum}
	Let $u\in C^{1,2}(\overline Q_T)$ be a solution of~\eqref{PDE} with $A\in C^1(\R,\R^d)$ and $h\in C(\overline Q_T)$. Then 
	\begin{equation} \label{MP-nonlinear}
		\|u(t)\|_{L^\infty(Q_t)}
		\le \|u(0)\|_{L^\infty(D)}+\int_0^t\|h(s)\|_{L^\infty(D)}\dd s,\quad 0\le t\le T.
	\end{equation}
\end{proposition}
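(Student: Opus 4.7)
The plan is to invoke the nonlinear comparison principle (Theorem~\ref{t-comparison-principle}) with barrier functions that are \emph{constant in space}. This choice is natural because a spatial constant kills both the nonlinear flux divergence and the Laplacian, leaving only the time derivative on the left-hand side of the equation and reducing the admissibility condition to a one-dimensional ODE comparison.

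Concretely, I would define
$$
M(t):=\|u(0,\cdot)\|_{L^\infty(D)}+\int_0^t\|h(s,\cdot)\|_{L^\infty(D)}\,\dd s,
$$
and introduce the pair of barriers $u^+(t,x):=M(t)$ and $u^-(t,x):=-M(t)$. Since $h\in C(\overline Q_T)$ and $\overline Q_T$ is compact, $h$ is uniformly continuous, so the map $s\mapsto\|h(s,\cdot)\|_{L^\infty(D)}$ is continuous. Hence $M\in C^1([0,T])$ with $M'(t)=\|h(t,\cdot)\|_{L^\infty(D)}$, and in particular $u^\pm\in C^{1,2}(\overline Q_T)$ as required by Theorem~\ref{t-comparison-principle}. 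I would then verify the two hypotheses of that theorem for the operator $F(v)=\p_tv+\sum_j\p_jA_j(v)-\nu\Delta v$ (viewing $A(u)$ trivially as a $C^1$ function on $\R^{d+2}$). On the parabolic boundary $\p_pQ_T$: at $t=0$ one has $u^+(0,x)=\|u(0)\|_{L^\infty(D)}\ge u(0,x)\ge-\|u(0)\|_{L^\infty(D)}=u^-(0,x)$, and on $[0,T]\times\p D$ the Dirichlet condition~$u=0$ together with $M(t)\ge0$ gives $u^+\ge u\ge u^-$. For admissibility, since $u^\pm$ are spatially constant, $\nabla_xu^\pm\equiv0$ and therefore $\p_jA_j(u^\pm)=A_j'(u^\pm)\p_ju^\pm=0$, so
$$
F(u^+)(t,x)=M'(t)=\|h(t,\cdot)\|_{L^\infty(D)}\ge h(t,x)=F(u)(t,x),
$$
and symmetrically $F(u)\ge F(u^-)$. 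Thus both $(u^+,u)$ and $(u,u^-)$ are admissible.

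Theorem~\ref{t-comparison-principle} then delivers $-M(t)\le u(t,x)\le M(t)$ on $\overline Q_T$, i.e.\ $|u(s,x)|\le M(s)$ for every $(s,x)\in\overline Q_T$. Because $s\mapsto M(s)$ is nondecreasing, taking the supremum over $(s,x)\in Q_t$ yields exactly the claimed bound~\eqref{MP-nonlinear}. There is no substantial obstacle here; the only point requiring any care is the smoothness of $M$, which is needed so that $u^\pm$ lie in $C^{1,2}(\overline Q_T)$ and Theorem~\ref{t-comparison-principle} applies verbatim, and this is immediate from the continuity of $h$ on the compact cylinder.
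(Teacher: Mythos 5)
Your proof is correct, but it routes through a different key result than the paper does. The paper's own proof subtracts the primitive $H(t)=\int_0^t\|h(s)\|_{L^\infty(D)}\,\dd s$ from $u$, observes that $v=u-H$ satisfies the \emph{linear} differential inequality $\p_tv+\sum_ja_j(t,x)\p_jv-\nu\Delta v\le0$ with frozen coefficients $a_j=A_j'\circ u$, and invokes the classical maximum principle for linear parabolic inequalities (handling the lower bound by applying the same argument to $-u$, which solves an equation of the same form with flux $v\mapsto -A(-v)$). You instead feed the explicit spatially constant barriers $u^\pm=\pm M(t)$ directly into the paper's nonlinear comparison principle, Theorem~\ref{t-comparison-principle}, applied to the pairs $(u^+,u)$ and $(u,u^-)$. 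These are the same barrier idea in different clothing: your version is more self-contained within the paper, since it reuses a theorem already proved there rather than citing an external linear maximum principle, at the mild cost of having to verify that $u^\pm\in C^{1,2}(\overline Q_T)$ --- which you correctly reduce to the continuity of $s\mapsto\|h(s,\cdot)\|_{L^\infty(D)}$ on the compact cylinder, so that $M\in C^1$ with $M'(t)=\|h(t,\cdot)\|_{L^\infty(D)}$; the paper's version is marginally more elementary in that the pointwise maximum principle for a classical solution of a linear inequality does not require the duality argument underlying the proof of Theorem~\ref{t-comparison-principle}. Your verifications of the ordering on $\p_pQ_T$ and of admissibility (which reduces to $M'(t)\ge h(t,x)\ge -M'(t)$ because the constant-in-space barriers annihilate both the flux divergence and the Laplacian) are sound, and the monotonicity of $M$ correctly upgrades the pointwise bound $|u(s,x)|\le M(s)$ to the stated supremum over the whole cylinder $Q_t$.
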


\begin{proof}
	Since the function $-u$ satisfies an equation of the same form, it suffices to prove that $u(t,x)$ does not exceed the right-hand side of~\eqref{MP-nonlinear} for $(t,x)\in Q_T$. To this end, we introduce the functions
	$$
	H(t)=\int_0^t\|h(s)\|_{L^\infty(D)}\dd s,\quad a_j(t,x)=A_j'(u(t,x))
	$$
	and note that $v:=u-H\in C^{1,2}(\overline{Q}_T)$ satisfies the differential inequality
	$$
	\p_tv+\sum_{j=1}^da_j(t,x)\p_jv-\nu\Delta v\le 0.
	$$
	By the maximum principle, we have
	$$
	v(t,x)\le \max(u(0),0)\le \|u(0)\|_{L^\infty(D)}, \quad (t,x)\in Q_T.
	$$
Combining this with the definition of~$v$, we obtain the required inequality. 
\end{proof}

\addcontentsline{toc}{section}{Bibliography}
\def\cprime{$'$} \def\cprime{$'$} \def\cprime{$'$}
  \def\polhk#1{\setbox0=\hbox{#1}{\ooalign{\hidewidth
  \lower1.5ex\hbox{`}\hidewidth\crcr\unhbox0}}}
  \def\polhk#1{\setbox0=\hbox{#1}{\ooalign{\hidewidth
  \lower1.5ex\hbox{`}\hidewidth\crcr\unhbox0}}}
  \def\polhk#1{\setbox0=\hbox{#1}{\ooalign{\hidewidth
  \lower1.5ex\hbox{`}\hidewidth\crcr\unhbox0}}} \def\cprime{$'$}
  \def\polhk#1{\setbox0=\hbox{#1}{\ooalign{\hidewidth
  \lower1.5ex\hbox{`}\hidewidth\crcr\unhbox0}}} \def\cprime{$'$}
  \def\cprime{$'$} \def\cprime{$'$} \def\cprime{$'$}
\providecommand{\bysame}{\leavevmode\hbox to3em{\hrulefill}\thinspace}
\providecommand{\MR}{\relax\ifhmode\unskip\space\fi MR }
% \MRhref is called by the amsart/book/proc definition of \MR.
\providecommand{\MRhref}[2]{%
  \href{http://www.ams.org/mathscinet-getitem?mr=#1}{#2}
}
\providecommand{\href}[2]{#2}

\end{document}